\newtheorem{theorem}{Theorem}[section]
\newtheorem{corollary}[theorem]{Corollary}
\newtheorem{proposition}[theorem]{Proposition}
\theoremstyle{definition}
\newtheorem{definition}[theorem]{Definition}
\theoremstyle{remark}
\newtheorem{remark}[theorem]{Remark}
\numberwithin{equation}{section}
\begin{document}

\title[Primitive complete normal bases] 
{Primitive complete normal bases for regular extensions: Exceptional cyclotomic modules} 

\author{Dirk Hachenberger} 
\address{Institut f\"ur Mathematik, Universit\"at Augsburg, D-86135 Augsburg}
\email{hachenberger@math.uni-augsburg.de}

\subjclass[2010]{Primary 05A99, 11A99, 11T30, 12E20}

\date{10 December 2019}

\newcommand{\cfq}{\overline{\mathbb{F}}_q}
\newcommand{\F}{\mathbb{F}} 
\newcommand{\K}{\mathbb{K}} 
\newcommand{\fq}{\mathbb{F}_q} 
\newcommand{\fp}{\mathbb{F}_p} 
\newcommand{\fqm}{\mathbb{F}_{q^m}} 
\newcommand{\fqd}{\mathbb{F}_{q^d}} 
\newcommand{\fqn}{\mathbb{F}_{q^n}} 
\newcommand{\fqk}{\mathbb{F}_{q^k}} 
\newcommand{\fql}{\mathbb{F}_{q^\ell}}
\newcommand{\N}{\mathbb{N}}
\newcommand{\Z}{\mathbb{Z}} 
\newcommand{\C}{\mathbb{C}} 
\newcommand{\ft}[1]{\mathbb{F}_{q^{#1}}} 
\newcommand{\tr}{{\rm Tr}} 

\newcommand{\eps}{\varepsilon}
\newcommand{\opp}{{\rm \footnotesize opp}}

\newcommand{\Ord}{{\rm Ord}} 
\newcommand{\Tr}{{\rm Tr}} 
\newcommand{\No}{{\rm No}} 

\newcommand{\ord}{{\rm ord}}

\newcommand{\grp}{{\rm grp}}
\newcommand{\id}{{\rm id}}

\newcommand{\rad}{{\rm rad}}

\newcommand{\sord}{{\rm subord}}
\renewcommand{\char}{{\rm char}}
\renewcommand{\span}{{\rm span}}
\renewcommand{\max}{{\rm max}}
\renewcommand{\min}{{\rm min}}
\newcommand{\equi}{\Leftrightarrow}
\newcommand{\impli}{\Rightarrow}
\newcommand{\lcm}{{\rm lcm}}
\renewcommand{\ker}{{\rm kern}}
\newcommand{\im}{{\rm im}}
\newcommand{\supp}{{\rm Tr}}
\newcommand{\sgn}{\mathop{\rm sgn}}
\renewcommand{\log}{{\rm log}}
\renewcommand{\det}{{\rm det}} 
\renewcommand{\deg}{{\rm deg}}
\renewcommand{\dim}{\mathop{\rm dim}}
\newcommand{\DIV}{\, {\tt div}\, }
\newcommand{\MOD}{\, {\tt mod}\, }

\begin{abstract} A primitive completely normal element for an extension 
$\mathbb{F}_{q^n}/\mathbb{F}_{q}$ of Galois fields is a generator of the multiplicative group of $\mathbb{F}_{q^n}$, which simultaneously 
is normal over every intermediate field of that extension. 
We are going to prove that such a generator exists when $\mathbb{F}_{q^n}/\mathbb{F}_{q}$ 
is an \textit{exceptional} regular extension. 
In combination with \cite{Ha-2001} our investigations altogether settle the existence of primitive completely normal bases 
for any regular extension. 
An important feature of the class of regular extensions is that they comprise every extension of prime power degree. 
\end{abstract}

\maketitle

\section{Introduction}  
\label{section-01} 
For a pair $(q,n)$, where $q>1$ is a prime power and $n\geq 1$ an integer we consider the corresponding extension $\fqn/\fq$ of 
Galois fields. Let $\sigma:v \mapsto v^q$ be the Frobenius automorphism, defined on $\fqn$. 
Then $\sigma$ generates the (cyclic) Galois group of $\fqn/\fq$. 
The intermediate fields of $\fqn/\fq$ correspond  
to the divisors of $n$. 
If $d$ is such a divisor, and if $v\in \fqn$, then $v$ is \textit{normal} over $\fqd$, if its conjugates 
under the Galois group of $\fqn$ over $\fqd$ 
(that is, $\sigma^{dj}(v)$ for $j=0,...,\frac{n}{d}-1$) form an $\fqd$-basis of $\fqn$. 
If $v$ is even normal over $\fqd$ for every $d\mid n$, then $v$ is called a \textit{completely normal} element of $\fqn/\fq$. 

The \textit{Complete Normal Basis Theorem} (1986, by Blessenohl and Johnsen \cite{Bl-Jo-1986}) states that for every extension 
of Galois fields there exists such a completely normal element. 
In the eighties of the last century, there has been proved another celebrated result: The 
\textit{Primitive Normal Basis Theorem} (1987, Lenstra and Schoof \cite{Le-Sc-1987}) 
says that for every extension 
$\fqn/\fq$ there exists a primitive element (of $\fqn$) that is normal over $\fq$. 
Recall from the basic terminology of finite fields (we refer to Lidl and Niederreiter \cite{Li-Ni-1986}) 
that a \textit{primitive} element of $\fqn$ is a generator of its (cyclic) multiplicative group. 

Seeing these two fundamental theorems side by side, it is only natural to ask whether any extension of Galois fields 
even admits a generator of the multiplicative group which 
simultaneously is normal over every intermediate field. Based on the positive results of a computer 
search\footnote{In \cite{Mo-Mu-1996}, for every pair 
$(q,n)$, with $q \leq 97$ a prime and with $q^n < 10^{50}$, there is tabulated a monic irreducible polynomial 
of degree $n$ over $\fq$, whose roots are primitive and completely normal elements for the corresponding extension 
$\fqn/\fq$.}, Morgan and Mullen \cite{Mo-Mu-1996} 
formulated the conjecture that this is indeed the case for any pair $(q,n)$. 
For example, the roots of the polynomials 
\begin{equation} \label{3-8-and-3-16} 
x^8+x^7+2x^3+2x^2+2 \in \mathbb{F}_3[x] \ {\mbox{ and }} \ x^{16}+x^{15}+2x^6+2x+2 \in \mathbb{F}_3[x] 
\end{equation}  
are such \textit{primitive completely normal bases} for the pairs $(q,n)=(3,8)$ and $(q,n)=(3,16)$, respectively. 

Because of the complicated nature of completely normal elements (we refer to \cite{Ha-1997} for an extensive study, and to 
\cite{Ha-2013} for a recent survey), 
a proof of this conjecture is an extremely difficult task, which, if ever found, may be discovered only 
step by step according to other explorations in the theory of finite fields. 
It is the aim of the present paper to put another piece into the puzzle of a proof of the conjecture of Morgan and Mullen: 
We are going to show the existence of a primitive completely normal element for pairs $(q,n)$ which belong to the class of \textit{exceptional 
regular extensions} (see Section \ref{section-03}). 

Throughout, let $p$ be the characteristic of the underlying fields. 
We write $n=p^a n'$ with $n'$ being the \textit{$p$-free part}, that is, $n'$ is not divisible by $p$.  
Furthermore, let $\pi(n')$ denote the set of distinct prime divisors of $n'$ and 
$\rad(n'):=\prod_{r\in \pi(n')}r$ the \textit{radical} of $n'$ (which is equal to $1$ if $n'=1$). 
The least integer $s\geq 1$ such that $q^s \equiv 1 \MOD \rad(n')$ is denoted by 
$\ord_{\rad(n')}(q)$; it is the \textit{order of $q$ modulo $\rad(n')$}. 
By \cite[Definition 1.3]{Ha-2001}, an extension $\fqn/\fq$, as well as the pair $(q,n)$, are \textit{regular}, provided that $\ord_{\rad(n')}(q)$ and $n$ are relatively prime. 
The description of completely normal elements for regular extensions requires the distinction into two subclasses:   
\begin{itemize} 
\item The class of non-exceptional ones, 
\item and the class of exceptional extensions. 
\end{itemize} 
The difference is explained in Section \ref{section-03} after introducing the concept of a 
cyclotomic module in Section \ref{section-02}. 
At this place it is important to note that $(q,r^m)$ is always regular and non-exceptional (for arbitrary $q$), when $r$ is any odd prime, or when $r=p$. 
The phenomenon of exceptionality however occurs for certain $2$-power extensions, namely when 
$q\equiv 3 \MOD 4$ and $n=2^c$ with 
$c \geq 3$ and when $\ord_{2^c}(q)=2$. 

The main result in \cite{Ha-2001} is as follows: {\em Assume that $(q,n)$ is regular, and further that 
$q\equiv 1 \MOD 4$ if $q$ is odd and $n$ is even. Then there exists a primitive completely normal element in $\fqn/\fq$.} 
A cornerstone of its proof has been the ability to efficiently describe the characteristic 
function of the set of all primitive completely normal elements in such extensions by using the theory of finite field characters.  
The additional assumption ($q\equiv 1 \MOD 4$ if $q$ is odd and $n$ is even) had been chosen to guarantee that the pair under consideration is a non-exceptional one, because, in a sense which will become clear in Section \ref{section-04}, the exceptional 
cases disturb a very pleasant structure which makes their handling much more difficult. 

In the meantime however, and this is a central part of the present contribution, 
we are able to develop an efficient (though more involved) character based description of 
the set of all primitive completely normal elements in exceptional regular extensions as well (see Sections \ref{section-05} and \ref{section-06}, as well 
as Section \ref{section-13} for a further technical detail). 
We assume that $q\equiv 3 \MOD 4$ and that $n$ is even. 
The use of finite field characters leads 
to the sufficient number theoretical existence criterion in Proposition \ref{prop:suff-crit} of Section 
\ref{section-07}. The analysis of this criterion is carried out in Sections \ref{section-08}-\ref{section-10} for the case where $n\equiv 0 \MOD 8$; 
it is satisfied for all $(q,n)$ different from $(3,8)$ or $(3,16)$. 
In Section \ref{section-11} we consider all degrees $n$ with $n \equiv 2 \MOD 4$ or $n \equiv 4 \MOD 8$. 
The particular instances $(3,8)$ and $(3,16)$ are briefly considered in Section \ref{section-12}. 
Our main result is as follows: 

\begin{theorem} \label{thm:main} 
\label{thm:pcn-neu} 
Consider a regular extension $\fqn/\fq$, where $q\equiv 3 \MOD 4$ and where $n$ is even. 
Then there exists a primitive element of $\fqn$ that is completely normal over $\fq$. 
\end{theorem}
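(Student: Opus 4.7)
The plan is to follow the character-theoretic strategy of \cite{Ha-2001}, modified to accommodate the extra structural complications created by the exceptional $2$-power cyclotomic modules. Since the non-exceptional case is already settled there, and since by hypothesis $q\equiv 3\MOD 4$ and $n$ is even, the remaining task is to handle exactly those regular pairs $(q,n)$ whose $2$-part produces an exceptional cyclotomic module. Write $n=2^c\cdot m$ with $m$ odd; regularity and $q\equiv 3\MOD 4$ force $\ord_2(q)=2$, so the exceptional behaviour is concentrated in the cyclotomic $\fq[x]$-module structure at the prime $2$ when $c\geq 3$.

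First I would recall from Section \ref{section-02} the decomposition of $\fqn$ as a direct sum of cyclotomic $\fq[x]$-modules and use it to reduce the completely-normal-basis property to a condition checked separately on each such module. This reduction is what allows the existence problem to be attacked by characters, exactly as in the non-exceptional situation. The new ingredient is the modified characteristic function constructed in Sections \ref{section-05} and \ref{section-06}, which is tailored so that it still correctly detects complete normality on the exceptional $2$-power piece. Writing this characteristic function as a sum over multiplicative and additive characters, then separating the trivial character from the rest and bounding the non-trivial contributions by standard character sum estimates (Gau\ss\ sums and their analogues for additive characters on cyclotomic modules), I would arrive at the sufficient number-theoretic inequality that is formalized as Proposition \ref{prop:suff-crit} in Section \ref{section-07}.

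The main body of the proof then consists in verifying this inequality in all cases. I would split according to the $2$-adic valuation of $n$. For $n\equiv 0\MOD 8$ the bound is the most delicate and should occupy Sections \ref{section-08}--\ref{section-10}: here the exceptional module is large and its character-theoretic weight sits closest to the threshold, so sharp estimates (exploiting that $q\equiv 3\MOD 4$ constrains the relevant orders) are required; this is where I expect the main obstacle to lie. For the intermediate cases $n\equiv 2\MOD 4$ and $n\equiv 4\MOD 8$, to be treated in Section \ref{section-11}, the exceptional factor is either absent or small enough that the sufficient inequality holds with room to spare, and the analysis should be substantially easier. The combination of regularity with $q\geq 3$ will provide enough slack in the estimates in all but finitely many instances.

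Finally, one has to deal with the short list of $(q,n)$ for which the inequality of Proposition \ref{prop:suff-crit} fails. By the estimates above this reduces to the two pairs $(3,8)$ and $(3,16)$, and for these the theorem is verified by hand in Section \ref{section-12}: the explicit polynomials displayed in \eqref{3-8-and-3-16} have roots that are primitive and completely normal, and one only needs to check primitivity and normality over each intermediate field, which is a finite calculation. Assembling the non-exceptional result of \cite{Ha-2001}, the verification of the character-sum bound in the three residue classes of $n$ modulo $8$, and the treatment of these two exceptional pairs then establishes the theorem for every regular $(q,n)$ with $q\equiv 3\MOD 4$ and $n$ even.
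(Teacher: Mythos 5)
Your proposal follows essentially the same route as the paper: reduction via the cyclotomic-module decomposition, the modified character-sum description for the exceptional modules, the sufficient inequality of Proposition \ref{prop:suff-crit}, case analysis by the $2$-adic valuation of $n$, and explicit verification for $(3,8)$ and $(3,16)$. The only ingredient you leave implicit is the structural lemma of Section \ref{section-13} (justifying Remark \ref{rem:mixed-orders}), which the paper needs to make the exceptional character computation of Section \ref{section-06} valid, but this is subsumed in your appeal to the ``modified characteristic function'' being correct.
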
 

\noindent 
Altogether, this proves the Morgan-Mullen-Conjecture for the whole class of 
regular extensions, whose importance, as mentioned in the abstract, relies on the fact that $(q,n)$ is regular for every $q$ whenever $n$ is any prime power. We therefore have:  
\begin{corollary} 
\label{cor:ppext} 
Let $r$ be any prime, $m \geq 0$ any integer and let $\fq$ be any Galois field. 
Then there exists a primitive element of $\mathbb{F}_{q^{r^m}}$ which is completely normal over $\fq$. \qed 
\end{corollary}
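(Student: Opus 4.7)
The plan is to deduce the corollary directly from Theorem \ref{thm:pcn-neu} together with the main result of \cite{Ha-2001}, via a short case analysis. The trivial case $m=0$ is immediate: any generator of the multiplicative group of $\fq$ is vacuously completely normal over $\fq$, since the only intermediate field is $\fq$ itself. I therefore assume $m\geq 1$ in what follows.

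The first substantive step is to verify that every prime power extension $(q,r^m)$ is regular in the sense of Section \ref{section-01}. Writing $n=r^m$ and letting $p$ denote the characteristic of $\fq$, there are two sub-cases. If $r=p$, then $n$ itself is a $p$-power, so its $p$-free part is $n'=1$, whence $\rad(n')=1$ and $\ord_{\rad(n')}(q)=1$ is trivially coprime to $n$. If $r\neq p$, then $r$ does not divide $q$, so $q$ is a unit modulo $r$; hence $\ord_r(q)$ divides the group order $r-1$ of $(\Z/r\Z)^{\times}$ and in particular is coprime to $r$. Since $\rad(n')=r$ in this case, $\gcd(\ord_{\rad(n')}(q),r^m)=1$ as required.

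The second step is the case distribution that assigns $(q,r^m)$ to one of the two available existence theorems. If $r$ is odd, then $n=r^m$ is odd, so the side condition of the main theorem of \cite{Ha-2001} (namely, $q\equiv 1\MOD 4$ when $q$ is odd and $n$ is even) is vacuous, and that theorem applies. If $r=2$ and $p=2$, then $q$ is a power of $2$, hence even, and the side condition is again vacuous. If $r=2$ and $p$ is odd, then $q$ is odd; when $q\equiv 1\MOD 4$, the main theorem of \cite{Ha-2001} applies, whereas when $q\equiv 3\MOD 4$, Theorem \ref{thm:pcn-neu} supplies a primitive completely normal element for $\fqn/\fq$.

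No substantial obstacle is anticipated; all of the mathematical content lives in Theorem \ref{thm:pcn-neu} and in \cite{Ha-2001}, and the only task is to confirm that together they cover the entire class of prime power degrees, which the regularity check and the case analysis above accomplish.
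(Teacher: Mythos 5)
Your proposal is correct and follows essentially the same route the paper intends: the corollary is stated with \qed precisely because it reduces to the observation that every pair $(q,r^m)$ is regular (which you verify correctly in the two sub-cases $r=p$ and $r\neq p$), after which the main theorem of \cite{Ha-2001} and Theorem \ref{thm:pcn-neu} together cover all regular extensions. Your case analysis and the handling of $m=0$ are both accurate, so there is nothing to add.
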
 
\noindent 
At this place, we like to mention that the essential breakthrough of Blessenohl and Johnsen's proof \cite{Bl-Jo-1986} was just 
to provide the existence of completely normal elements for pairs $(q,r^m)$ with $r$ a prime. 
After that, if $\prod_{i=1}^s r_i^{m_i}$ is the prime power factorization 
of some $n$, and if $v_i$ is completely normal for the pair $(q,r_i^{m_i})$, a standard argument 
 shows that the product $v:=\prod_{i=1}^s v_i$ gives a completely normal element for $\fqn/\fq$ (see \cite[Hilfssatz 4.4]{Bl-Jo-1986} or \cite[Corollary 4.11]{Ha-1997}). 
However, even if the $v_i$ would additionally be primitive in their extension, 
then $v$ is definitely not primitive in the composed field $\fqn$. This is another reason why 
a \textit{primitive complete normal basis theorem} is much more difficult to prove.

To conclude this introduction, we mention that Blessenohl \cite{Bl-2005}   
has proved the existence of a primitive completely normal element for any pair $(q,2^a)$ with $q\equiv 3 \MOD 4$ and $2^a$ dividing 
$q^2-1$. Another region of $2$-power extensions is considered in \cite{Ha-2010}: If $q\equiv 3 \MOD 4$ and 
if $m\geq e+3$, where $2^e$ is the largest power of $2$ dividing $q^2-1$, then there are at least 
$4 \cdot (q-1)^{2^{m-2}}$ primitive elements in $\mathbb{F}_{q^{2^m}}$ which are completely normal over $\fq$. 
While the proofs in \cite{Bl-2005,Ha-2010} rely on different arguments, the results still leave open 
some $2$-power extensions. However, all pairs $(q,n)$ considered in \cite{Bl-2005,Ha-2010} are 
covered by the present Theorem \ref{thm:main}.

\section{The canonical decomposition of a regular extension}  
\label{section-02} 
For any $d\mid n$ the additive group of $\fqn$ carries the structure of an $\fqd[x]$-module; the operation of 
$f(x) \in \fqd[x]$ on $z\in \fqn$ is given by 
$z \mapsto f(\sigma^d)(z)$. In fact, $\fqn$ is a cyclic $\fqd[x]$-module, and the generators of $\fqn$ in this context are presicely the 
normal elements of $\fqn/\fqd$. 
The \textit{$q^d$-order} of $z\in \fqn$ is the monic polynomial $g(x) \in \fqd[x]$ of least degree such that 
$g(\sigma^d)(z)=0$. It is denoted by $\Ord_{q^d}(z)$, and $z$ is normal over $\fqd$ if and only if $\Ord_{q^d}(z)=x^{n/d}-1$. 

Within the polynomial ring $\fq[x]$ we have the canonical decomposition 
\begin{equation} \label{eqn:2B1} 
x^n-1=(x^{n'}-1)^{p^a}=\prod_{k\mid n'}\Phi_k(x)^{p^a}, 
\end{equation}  
where $\Phi_k(x)\in \fq[x]$ denotes the $k$-th cyclotomic polynomial. 
The coefficients of $\Phi_k(x)$ are elements of the prime field $\mathbb{F}_p$; moreover, 
$\Phi_k(x)^{p^a}=\Phi_k(x^{p^a})$. 
For every $k\mid n'$, we therefore call  
\begin{equation} \label{eqn:cyc-mod} 
{\mathcal C}_k:=\{w \in \fqn: \Phi_k(\sigma)^{p^a}(w)=0 \} 
\end{equation} 
the \textit{cyclotomic module} of $\fqn/\fq$ corresponding to $k$.  
The $\fq$-dimension of ${\mathcal C}_k$ is equal to $p^a \cdot \deg(\Phi_k(x))=p^a \cdot \varphi(k)$, where $\varphi$ is the 
Euler function.
According to (\ref{eqn:2B1}), we obtain the (canonical) decomposition of $\fqn$ into the direct sum of its cyclotomic modules:  
\begin{equation} 
\label{eqn:2B2} 
\fqn=\bigoplus_{k\mid n'} {\mathcal C}_k. \end{equation} 
Consequently, any $z\in \fqn$ can uniquely be written as $\sum_{k\mid n'} z_k$, where $z_k \in {\mathcal C}_k$ for every $k\mid n'$. 
Moreover, $z$ is normal in $\fqn/\fq$ if and only if $z_k$ generates ${\mathcal C}_k$ as $\fq[x]$-module for any $k \mid n'$, and this holds if and only if 
$\Ord_q(z_k)=\Phi_k(x)^{p^a}$ for any $k\mid n'$. 

We are now going to explain what can be said about the components of a completely normal element. 
Consider therefore again a divisor $k$ of $n'$.  
With $\rad(k)$ being the radical of $k$ we have 
\begin{equation} \label{eqn:2C1} 
\Phi_k(x)^{p^a}=\Phi_{\rad(k)} (x^{p^a k/\rad(k)}). 
\end{equation} 
The important number $p^ak/\rad(k)$ is called the \textit{module character} of ${\mathcal C}_k$ (compare with \cite[Definition 5.4.30]{Ha-2013}), 
a notion which is motivated by the fact that ${\mathcal C}_k$ (with respect to $\sigma^d$) is an $\fqd[x]$-submodule of $\fqn$ for every $d$ dividing $p^ak/\rad(k)$. 
Moreover, ${\mathcal C}_k$ is, with respect to any such $d$,  a cyclic $\fqd[x]$-module, and  
$w$ generates ${\mathcal C}_k$ as such if and only if  
\begin{equation} \label{eqn:2C2} 
\Ord_{q^d}(w) = \Phi_{\rad(k)} (x^{p^ak/(\rad(k)d)}). 
\end{equation} 
Now, an element $w\in {\mathcal C}_k$ is a \textit{complete generator} for ${\mathcal C}_k$ over $\fq$, provided that 
$w$ (simultaneously) generates ${\mathcal C}_k$ as an $\fqd[x]$-module for every divisor $d$ of $p^ak/\rad(k)$, which means that 
(\ref{eqn:2C2}) holds for every divisor $d$ of the module character.  
So, considering once more the canonical decomposition of $\fqn$ over $\fq$ in (\ref{eqn:2B2}), it becomes transparent 
that for an element $z=\sum_{k\mid n'}z_k$ to be completely normal, for every $k\mid n'$, the component 
$z_k$ necessarily has to be a complete generator for ${\mathcal C}_k$.  
The converse of that statement is not true in general, but it holds if and only if $n'$ and $\ord_{\rad(n')}(q)$ are relatively prime 
(see \cite[Section 19]{Ha-1997} and also \cite[Theorem 5.4.45]{Ha-2013}). 
We may therefore conclude: 

\begin{proposition} \label{prop:cn-by-can-dec} 
Assume that $(q,n)$ is a regular pair, which means that $n$ and $\ord_{\rad(n')}(q)$ are relatively prime. 
Then $z = \sum_{k\mid n'} z_k \in \fqn$ is completely normal over $\fq$ if and only if any component 
$z_k$ of its canonical decomposition is a complete generator for the cyclotomic module 
${\mathcal C}_k$. \qed 
\end{proposition}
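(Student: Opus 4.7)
The plan is to establish the two implications separately; only the sufficiency direction requires the regularity hypothesis.

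For necessity ($\Rightarrow$), I would argue as follows. The projection $\pi_k\colon\fqn\to \mathcal{C}_k$ from the canonical decomposition (\ref{eqn:2B2}) is realised as $v\mapsto e_k(\sigma)(v)$, where $e_k\in \fq[x]$ is the idempotent modulo $x^n-1$ attached to the coprime factor $\Phi_k(x)^{p^a}$. Since the coefficients of $e_k$ lie in $\fq\subseteq \fqd$ and $e_k(\sigma)$ commutes with $\sigma^d$, the map $\pi_k$ is $\fqd[x]$-linear for every $d\mid n$. Whenever $d$ divides the module character $p^ak/\rad(k)$, the summand $\mathcal{C}_k$ is itself an $\fqd[x]$-submodule of $\fqn$; applying the surjective $\fqd[x]$-map $\pi_k$ to the cyclic $\fqd[x]$-generator $z$ of $\fqn$ then shows that $z_k$ generates $\mathcal{C}_k$ as an $\fqd[x]$-module, that is, (\ref{eqn:2C2}) holds. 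Ranging $d$ over all divisors of the module character will prove that each $z_k$ is a complete generator.

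For sufficiency ($\Leftarrow$), I would fix a divisor $d=p^bd'$ of $n$ with $\gcd(d',p)=1$ (so $d'\mid n'$) and decompose $\fqn$ into its $\fqd[x]$-cyclotomic modules $\mathcal{D}_\ell^{(d)}$ indexed by $\ell\mid n'/d'$. The strategy is to refine each summand $\mathcal{C}_k$ of (\ref{eqn:2B2}) as a direct sum of certain $\mathcal{D}_\ell^{(d)}$ and, using the complete-generator hypothesis on $z_k$, to check that the $\mathcal{D}_\ell^{(d)}$-component of $z$ generates $\mathcal{D}_\ell^{(d)}$ as an $\fqd[x]$-module. Summed over $\ell$ this would yield $\Ord_{q^d}(z)=x^{n/d}-1$, and ranging $d$ over all divisors of $n$ would give complete normality. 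Regularity enters here through $\gcd(n,\ord_{\rad(n')}(q))=1$, which forces $\gcd(d,\ord_{\rad(k)}(q))=1$, hence $\ord_{\rad(k)}(q^d)=\ord_{\rad(k)}(q)$ for every $k\mid n'$. Equivalently, the irreducible $\fq$-factors of $\Phi_{\rad(k)}(x)$ remain irreducible over $\fqd$; combined with (\ref{eqn:2C1}) this gives the precise splitting of each $\mathcal{C}_k$ over $\fqd$ needed for the matching of generators.

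The main obstacle will be the combinatorial bookkeeping matching the $q^d$-order condition (\ref{eqn:2C2}) on each $z_k$ with the $\fqd[x]$-cyclicity of the corresponding component of $z$ inside $\mathcal{D}_\ell^{(d)}$. Without regularity, enlarging scalars from $\fq$ to $\fqd$ can merge Frobenius orbits across different cyclotomic modules, so that complete generators of the individual $\mathcal{C}_k$ over $\fq$ need not assemble into a normal element over some intermediate field; this is why the converse fails in general. The delicate step will therefore be to show, exploiting the coprimality $\gcd(n,\ord_{\rad(n')}(q))=1$, that no such merging occurs and that the $\fqd$-cyclotomic pieces of $z$ are indeed cyclic generators.
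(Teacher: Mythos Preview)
The paper does not prove this proposition in-line: it is stated with a \qed\ and referred to \cite[Section~19]{Ha-1997} and \cite[Theorem~5.4.45]{Ha-2013}, while the necessity direction is explained informally in the paragraph preceding the statement. Your necessity argument via the $\fqd[x]$-linearity of the idempotent projections $\pi_k=e_k(\sigma)$ is correct and agrees with that discussion.

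For sufficiency your overall strategy is right, but one structural point is reversed. For $d=p^bd'$ with $d'\mid n'$ one has
\[
\Phi_\ell(x^{d'})\;=\;\prod_{\substack{k\mid n'\\ k/\gcd(k,d')=\ell}} \Phi_k(x),
\]
so it is each $\fqd$-cyclotomic module $\mathcal D_\ell^{(d)}$ that decomposes as a direct sum of certain $\mathcal C_k$, not the other way round (take, e.g., $n=6$, $d=2$: then $\mathcal D_3^{(2)}=\mathcal C_3\oplus\mathcal C_6$). The task is therefore to show that the partial sum $\sum_{k\in S_\ell} z_k$ generates the cyclic $\fqd[x]$-module $\mathcal D_\ell^{(d)}$. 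Here the $\fqd[x]$-annihilators of the various $\mathcal C_k$ inside a fixed $\mathcal D_\ell^{(d)}$ are in general \emph{not} coprime, so one cannot simply add generators; what one needs instead is control over the $q^d$-order of each $z_k$ even when $d$ does not divide the module character $p^ak/\rad(k)$. Your observation that regularity forces $\ord_{\rad(k)}(q^d)=\ord_{\rad(k)}(q)$---equivalently, that the irreducible $\fq$-factors of $\Phi_{\rad(k)}$ stay irreducible over $\fqd$---is exactly the ingredient that makes this control possible, and the argument in the cited references proceeds along these lines. Once the direction of the refinement is corrected, your outline matches that approach.
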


\section{Exceptional and non-exceptional cyclotomic modules}   
\label{section-03} 
In the present section we are going to define the notions of exceptionality and non-exceptionality within the class of regular extensions.  

\begin{definition} 
\label{def:exceptional} Let $(q,n)$ be regular and consider a divisor $k$ of $n'$. 
We write $k=2^c \ell$, where $\ell$ is odd. 
Then the cyclotomic module ${\mathcal C}_k$ is called \textit{exceptional} (over $\fq$), provided the following specific number theoretical conditions are satisfied: 
\begin{equation} 
\label{eqn:3A1} 
{\mbox{ $q \equiv 3 \MOD 4$ and $c \geq 3$ and $\ord_{2^c}(q)=2$.}} 
\end{equation} 
In all other cases, ${\mathcal C}_k$ is \textit{non-exceptional}. 
The notions exceptional and non-exceptional are also used for the divisor $k$. \qed 
\end{definition} 

\begin{remark} 
Given that $(q,n)$ is regular, the entire field extension $\fqn/\fq$ as well as the pair $(q,n)$ are called \textit{exceptional}, 
provided that there exists a $k\mid n'$ such that the cyclotomic module ${\mathcal C}_k$ is an exceptional one. 
On the other hand, $\fqn/\fq$ as well as $(q,n)$ are \textit{non-exceptional}, if $\fqn$ is  
composed by non-exceptional cyclotomic modules over $\fq$, only; 
and this holds if and only if one of the following cases occurs:  
\begin{enumerate} 
\item $q$ is even, or 
\item $q\equiv 1 \MOD 4$, or 
\item $q\equiv 3 \MOD 4$ and $n' \not\equiv 0 \MOD 8$. 
\end{enumerate} 

\noindent 
As discussed in Section \ref{section-01}, 
the class of regular extensions with $q$ even, or with $q\equiv 1 \MOD 4$,  or with $q\equiv 3 \MOD 4$ and $n'$ odd is considered 
in \cite{Ha-2001}, and so we are left here with those regular extensions, where $q\equiv 3\MOD 4$ and $n$ is even (see Theorem 
\ref{thm:main}). 
Because of the possible occurence of exceptional cyclotomic modules, we distinguish these remaining pairs into the following 
two subclasses: 
\begin{itemize} 
\item[(a)] Either $q\equiv 3 \MOD 4$ and $n' \equiv 0 \MOD 8$,  
\item[(b)] or $q\equiv 3 \MOD 4$ and $n' \equiv 2 \MOD 4$ or $n'\equiv 4 \MOD 8$. 
\end{itemize} 
\noindent Exceptional cyclotomic modules occur precisely in (a). \qed 
\end{remark} 

\noindent 
For a regular pair $(q,n)$ with $q \equiv 3 \MOD 4$ and $n$ even, 
we are now going to figure out, which of the divisors $k\mid n'$ are exceptional, and which are not. 
Let therefore  
$n'=2^b\overline{n}$ with $\overline{n}$ being odd (hence $b\geq 1$). Furthermore, let $2^e$ be the maximal power of $2$ dividing $q^2-1$ (then $e\geq 3$), 
and for $j=0,\dots,b$ define  
\begin{equation} 
\label{eqn:3C1} 
D_j:=\{2^j\ell: \ell\mid \overline{n}\}. 
\end{equation}  
giving a partition of the set of all divisors of $n'$. 
We next introduce the sets 
\begin{equation} 
\label{eqn:3C2} 
{\mathcal N}' := \left\{\begin{array}{ll} D_0 \cup D_1, & {\mbox{if $b=1$}} \\ 
                                         D_0 \cup D_1 \cup D_2, & {\mbox{if $b\geq 2$}},\end{array}\right. {\mbox{ and }} \ 
                                         {\mathcal N}'':= \bigcup_{j=e+1}^b D_j {\mbox{ when $b>e$}}, 
                                         \end{equation} 
as well as                                          
\begin{equation} 
\label{eqn:3C3} 
{\mathcal E}  := \bigcup_{j=3}^{\min(b,e)} D_j, \ \ {\mbox{ when $b\geq 3$}}. 
\end{equation}  
Finally, write ${\mathcal N}:={\mathcal N}' \cup {\mathcal N}''$.  
Then, altogether, for $k\mid n'$, the cyclotomic module ${\mathcal C}_k$ is 
\begin{itemize} 
\item exceptional when $k\in {\mathcal E}$, 
\item and non-exceptional when $k\in {\mathcal N}$. 
\end{itemize} 
Observe that $\mathcal E$ and ${\mathcal N}''$ are empty when $b\leq 2$.

\section{Complete generators for cyclotomic modules of regular extensions}   
\label{section-04} 
The aim of the present section is to provide a strengthening as well as a refinement of Proposition \ref{prop:cn-by-can-dec}. 
For an integer $k$ which is relatively prime to $q$, the 
\textit{sub-order} of $q$ modulo $k$ is defined to be 
\begin{equation} 
\label{eqn:3B1} 
\sord_k(q)  := \frac{\ord_k(q)}{\ord_{\rad(k)}(q)} . 
\end{equation} 
Observe that $\sord_k(q)=\ord_k(q^u)$, when $u=\ord_{\rad(k)}(q)$;  
moreover, $\sord_k(q)$ is a divisor of $k/\rad(k)$, and therefore only composed 
of primes dividing $k$ (see \cite[Section 19]{Ha-1997}). We therefore write 
\begin{equation} 
\label{eqn:3B2} 
\sord_k(q)=\prod_{r\in \pi(k)} r^{\alpha(r)}, 
\end{equation} 
where $\alpha(r)\geq 0$ for all $r\in \pi(k)$, and where $\pi(k)$ as before denotes the set of prime divisors of $k$ 
(with the convention that $\sord_k(q)=1$, if $k=1$, that is, when $\pi(k)$ is empty).

\begin{definition} 
\label{def:central-index} 
Let $(q,n)$ be a regular pair and for any $k\mid n'$, using (\ref{eqn:3B2}), let 
\begin{equation} 
\label{eqn:3B3} 
\tau_k:=\prod_{r\in \pi(k)} r^{\lfloor \alpha(r)/2 \rfloor}, 
\end{equation} 
(where $\lfloor \rho \rfloor$ denotes the integral part of $\rho$). 
Then $\tau_k$ is called the {\em central index} of the corresponding cyclotomic module ${\mathcal C}_k$. \qed 
\end{definition} 

\noindent 
By definition, the central index $\tau_k$ divides $k/\rad(k)$. 
If $k$ is exceptional, then $\tau_k$ is odd and therefore even 
$2\tau_k$ divides $k/\rad(k)$. The important role of the central index $\tau_k$ lies in the following result, which is the announced 
strengthening of Proposition \ref{prop:cn-by-can-dec} (see \cite[Section 20]{Ha-1997}). 

\begin{proposition}  
\label{prop:complete-gen-reg-cyc-mod} 
Consider a regular pair $(q,n)$ and a cyclotomic module 
${\mathcal C}_k \subseteq \fqn$ for some $k\mid n'$. If ${\mathcal C}_k$ is non-exceptional, then $w$ is a complete generator of ${\mathcal C}_k$ over $\fq$ if and only if 
the $q^{\tau_k}$-order of $w$ is equal to $\Phi_{k/\tau_k}(x)^{p^a}$.  
If however ${\mathcal C}_k$ is exceptional, then $w$ is a complete generator of ${\mathcal C}_k$ over $\fq$ if and only if 
the $q^{\tau_k}$-order of $w$ is equal to $\Phi_{k/\tau_k}(x)^{p^a}$ and the 
$q^{2\tau_k}$-order of $w$ is equal to $\Phi_{k/(2\tau_k)}(x)^{p^a}$.   \qed 
\end{proposition}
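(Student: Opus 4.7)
The forward implication is immediate from the definition of a complete generator: since $\tau_k$ divides the module character $m_k := p^a k/\rad(k)$, and in the exceptional case so does $2\tau_k$ (as noted just before the proposition), equation (\ref{eqn:2C2}) with $d=\tau_k$ and $d=2\tau_k$ yields the two stated identities, once one observes that $\rad(k/\tau_k)=\rad(k)$ and $\rad(k/(2\tau_k))=\rad(k)$ (both follow because $v_r(\tau_k)\le\lfloor\alpha(r)/2\rfloor<v_r(k)$ for each $r\in\pi(k)$, and $c\geq 3$ in the exceptional case).

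For the converse, the plan is to show that any divisor $d$ of $m_k$ is ``dominated'' by $\tau_k$ in a sense that lets the $q^{\tau_k}$-order condition (augmented by the $q^{2\tau_k}$-order condition in the exceptional case) force the correct $q^d$-order identity from (\ref{eqn:2C2}). The number-theoretic heart of this is the observation that the choice $\tau_k=\prod_{r\in\pi(k)}r^{\lfloor\alpha(r)/2\rfloor}$ makes the residual sub-order $\sord_{k/\tau_k}(q^{\tau_k})$ as small as possible. Since regularity ensures $\gcd(\tau_k,\ord_{\rad(k)}(q))=1$, a direct calculation with orders modulo $k$ shows that this residual sub-order equals $1$ in the non-exceptional case, and equals $2$ in the exceptional case; in the latter, the leftover $2$ originates at the prime $r=2$, because the condition $\ord_{2^c}(q)=2$ in (\ref{eqn:3A1}) forces $\alpha(2)$ to be odd, leaving one unpaired factor $2$ after forming $\tau_k$.

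Granting this structural identity, the equivalence proceeds by comparing annihilators of $\mathcal{C}_k$ as $\mathbb{F}_{q^d}[x]$-module for varying $d$. When $\sord_{k/\tau_k}(q^{\tau_k})=1$, the factorization of $\Phi_{\rad(k)}(y)$ over $\mathbb{F}_{q^{\tau_k}}$ splits into Galois-orbit factors that match uniquely with the required $\mathbb{F}_{q^d}$-factorizations for every $d\mid m_k$; consequently the $q^{\tau_k}$-order condition propagates downward ($d\mid\tau_k$) by Galois descent, and upward ($\tau_k\mid d\mid m_k$) by polynomial factorization, in each case reproducing (\ref{eqn:2C2}). In the exceptional case this propagation breaks at the $2$-part, and one then passes to the level $2\tau_k$, at which $\sord_{k/(2\tau_k)}(q^{2\tau_k})=1$; the extra hypothesis $\Ord_{q^{2\tau_k}}(w)=\Phi_{k/(2\tau_k)}(x)^{p^a}$ supplies the information about $w$ that is missing at the $2$-part, and the combined data at the two levels $\tau_k$ and $2\tau_k$ determine $\Ord_{q^d}(w)$ for all $d\mid m_k$ by the same descent-and-ascent procedure.

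The main obstacle is the number-theoretic core: a rigorous proof of the identity $\sord_{k/\tau_k}(q^{\tau_k})\in\{1,2\}$ together with the exact case distinction, since this requires the prime-by-prime accounting of how $\ord_k(q)$, $\ord_{\rad(k)}(q)$ and $\ord_{2^c}(q)$ interact when $q$ is replaced by $q^{\tau_k}$, and it is at this point that the arithmetic condition (\ref{eqn:3A1}) enters in an essential way. Once this technical kernel is in place, the remaining descent/ascent argument is a routine packaging of the cyclotomic-module structure theory developed in Sections \ref{section-02} and \ref{section-03}.
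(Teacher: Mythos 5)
First, note that the paper does not actually prove Proposition \ref{prop:complete-gen-reg-cyc-mod}: it is quoted from \cite[Section 20]{Ha-1997}, so there is no internal proof to compare against. Judged on its own terms, your argument has a genuine gap, and it is not merely the admitted missing ``technical kernel'': the structural identity on which your entire converse rests is false. Using $\ord_{k/\tau_k}(q^{\tau_k})=\ord_k(q)/\tau_k^2$ (Subsection \ref{subsec-8-2}) and $\ord_{\rad(k)}(q^{\tau_k})=\ord_{\rad(k)}(q)$ (regularity), one computes
$\sord_{k/\tau_k}(q^{\tau_k})=\sord_k(q)/\tau_k^2=\prod_{r\in\pi(k)}r^{\alpha(r)-2\lfloor\alpha(r)/2\rfloor}$,
which is the product of \emph{all} primes $r\mid k$ with $\alpha(r)$ odd; it lies in $\{1,2\}$ only when no odd prime has odd $\alpha$-exponent. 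A concrete non-exceptional counterexample is $q=2$, $k=49$: here $\ord_7(2)=3$ and $\ord_{49}(2)=21$, so the pair is regular and (being odd) non-exceptional, yet $\sord_{49}(2)=7$, $\alpha(7)=1$, $\tau_{49}=1$, and the ``residual sub-order'' equals $7$, not $1$. In this example $\Phi_{49}(x)$ splits into two irreducible factors over $\F_2$ and $\Phi_7(y)$ splits into two irreducible factors over $\F_{2^7}$, and these give two genuinely different decompositions of ${\mathcal C}_{49}$; your mechanism of ``Galois-orbit factors that match uniquely with the required $\F_{q^d}$-factorizations'' has nothing to act on, yet the proposition still asserts that maximal $q$-order forces maximal $q^{7}$-order. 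So the descent-and-ascent procedure you describe cannot be the proof.

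What actually has to be shown is a transversality statement: for each divisor $d$ of the module character, the proper $\F_{q^d}[\sigma^d]$-submodules of ${\mathcal C}_k$ meet the $\F_{q^{\tau_k}}[\sigma^{\tau_k}]$-isotypic components in a way that forces any element of maximal $q^{\tau_k}$-order (plus maximal $q^{2\tau_k}$-order in the exceptional case) to avoid all of them. This is a module-theoretic argument comparing images and dimensions of the various restriction maps --- precisely the kind of computation the paper carries out in Section \ref{section-13} for the submodules $W_{k,f}$ via the homomorphisms $\Psi_K$, $\Psi_L$, $\Psi_K'$ --- and it is where the choice $\tau_k=\prod_r r^{\lfloor\alpha(r)/2\rfloor}$ (balancing the two levels rather than trivializing one of them) and the condition (\ref{eqn:3A1}) really enter. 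Your forward implication via (\ref{eqn:2C2}) with $d=\tau_k,2\tau_k$ is fine, and your observation that $\alpha(2)=1$ in the exceptional case is correct, but the converse as you have set it up would fail for every $k$ divisible by an odd prime $r$ with $\alpha(r)$ odd.
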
 

\noindent 
In fact, any exceptional cyclotomic module ${\mathcal C}_k$ contains elements which either have $q^{\tau_k}$-order $\Phi_{k/\tau_k}(x)^{p^a}$ 
or $q^{2\tau_k}$-order $\Phi_{k/(2\tau_k)}(x)^{p^a}$ (see Remark \ref{rem:mixed-orders}).  

According to Proposition \ref{prop:complete-gen-reg-cyc-mod}, recalling the notation $\mathcal N$ and $\mathcal E$ from the previous section, we 
define the sets $F_k$ and $F_k^\varepsilon$ as follows. 
\begin{itemize} 
\item For any $k\in {\mathcal N}$, let $F_k$ be the set of monic divisors $f$ of $\Phi_{k/\tau_k}(x)$ such that $f\in \ft{\tau_k}[x]$ and $f$ is irreducible over 
$\ft{\tau_k}$. 
\item 
For any $k\in {\mathcal E}$, let 
$F^\varepsilon_k$ be the set of monic divisors $f$ of $\Phi_{k/(2\tau_k)}(x)$ such that $f\in \ft{\tau_k}[x]$ and $f$ is irreducible over 
$\ft{\tau_k}$. 
\end{itemize}

\noindent 
At this stage we are able to provide a refinement of Proposition \ref{prop:cn-by-can-dec} as follows, where our focus is on the 
exceptional cyclotomic modules. 
For $k\in {\mathcal E}$ we have 
\begin{equation} 
\label{eqn:3D2} {\mathcal C}_k = \bigoplus_{f \in F^\varepsilon_k} W_{k,f}, \end{equation} 
where 
$W_{k,f}$ is the $\ft{\tau_k}[x]$-submodule of ${\mathcal C}_k$ that is annihilated by $f(x^2)^{p^a}$; at the same time, 
$W_{k,f}$ is the $\ft{2\tau_k}[x]$-submodule of ${\mathcal C}_k$ which is annihilated by $f(x)^{p^a}$. 
According to (\ref{eqn:3D2}), 
any $w \in {\mathcal C}_k$ can uniquely be written as $\sum_{f \in F^\varepsilon_k} w_f$, where $w_f \in W_{k,f}$ for every $f$. 
The second part of Proposition \ref{prop:complete-gen-reg-cyc-mod} implies that 
$w$ is a complete generator of ${\mathcal C}_k$ if and only if 
for every $f\in F^\varepsilon_k$ the $q^{\tau_k}$-order of $w_f$ is equal to $f(x^2)^{p^a}$ and the 
$q^{2\tau_k}$-order of $w_f$ is equal to $f(x)^{p^a}$.  
In that case, $w_f$ is called a \textit{complete generator for $W_{k,f}$} over $\fq$. 

If finally we let  
\begin{equation} \label{eqn-Delta-eps} 
\Delta^\varepsilon:=\{(k,f): k \in {\mathcal E} {\mbox{ and }} f \in F^\varepsilon_k\},  
\end{equation} 
the decomposition (\ref{eqn:2B2}) can be refined to 
\begin{equation} 
\label{eqn:4C1} 
\fqn=\left( \bigoplus_{k\in {\mathcal N}} {\mathcal C}_k \right) \oplus 
\left( \bigoplus_{(k,f) \in \Delta^\varepsilon} W_{k,f} \right). 
\end{equation} 
According to this, any $z\in \fqn$ is uniquely decomposed as 
\begin{equation} 
\label{eqn:4C2}z=\sum_{k\in {\mathcal N}} z_{k} + \sum_{k\in \Delta^\varepsilon} z^\varepsilon_{k,f}. \end{equation} 


\section{A character based description of completely normal elements in regular extensions}  
\label{section-05} 
The aim of this section is to efficiently describe the characteristic function of the set of all 
completely normal elements of a regular extension by means of additive finite field characters. 
For the basic theory of characters, see \cite[Chaper 5]{Li-Ni-1986} and Jungnickel \cite[Chapter 7]{Ju-1993}. 

We may start with an arbitrary pair $(q,n)$. 
For simplicity, we write $E=\fqn$. The character group of $(E,+)$, denoted by $\widehat{E}$, is the set 
of all group homomorphisms 
$\chi:(E,+) \rightarrow (\C^*,\cdot)$, where $(\C^*,\cdot)$ is the multiplicative group of the complex numbers. 
Equipped with pointwise multiplication, $\widehat{E}$ is a group which is isomorphic to 
$(E,+)$. The neutral element of $\widehat{E}$ is the trivial additive character, denoted by $\chi_0$. 

Recall from Section \ref{section-02} that $E$ carries several module structures, arising from the intermediate fields over $\fq$. 
Given a divisor $d$ of $n$ and defining 
\begin{equation} 
\label{eqn:4A4} [f(x) \cdot \chi] (z) := \chi(f(\sigma^d)(z)) \ \ {\mbox{(for $z\in E$ and $f(x)\in \fqd[x]$ and $\chi \in \widehat{E}$)}} 
\end{equation} 
shows that $\widehat{E}$ likewise admits the structure of an $\fqd[x]$-module. 
In fact, $\widehat{E}$ and $E$ are even isomorphic as $\fqd[x]$-modules (for any $d\mid n$). 
Therefore, the whole structure and notion of various generators takes over from $E$ (over $\fq$) to the group of additive characters 
$\widehat{E}$ considered as an $\fq$-vector space. 
In particular, the 
$q^d$-order of any $\chi \in \widehat{E}$ (denoted by $\Ord_{q^d}(\chi)$) is the monic polynomial $g\in \fqd[x]$ of least degree such that 
$g(x) \cdot \chi = \chi_0$.

Next, for a divisor $d$ of the $p$-free part $n'$ of $n$ and for a monic polynomial $g \in \fqd[x]$ that divides $x^{n'/d}-1$, we define 
\begin{equation} \label{eqn:Gamma-d-g}
\Gamma_{d,g}:=\{\chi \in \widehat{E}: \Ord_{q^d}(\chi) {\mbox{ divides }} g(x)\}. 
\end{equation}  
This is the $\fqd[x]$-submodule of $\widehat{E}$ which is annihilated by $g(x)$; its cardinality is $q^{d \cdot \deg(g)}$, 
where $\deg(g)$ is the degree of $g$. 
Furthermore, let 
\begin{equation} 
\Gamma_{d,g}^\perp:=\{z \in E: \chi(z)=1 \ {\mbox{ for all $\chi$ in $\Gamma_{d,g}$}}\} 
\end{equation}  
denote the $\fqd[x]$-submodule of $E$ which is dual to $\Gamma_{d,g}$. 
By a basic fact from the theory of characters (see for instance \cite[Lemma 7.1.3]{Ju-1993}), one has 
\begin{equation} \label{eqn:basic-chi}
\sum_{\chi \in \Gamma_{d,g}} \chi(z) 
= \left\{ \begin{array}{cl} 
|\Gamma_{d,g}|,& {\mbox{ if $z\in \Gamma_{d,g}^\perp$,}} \\ 
0,& {\mbox{ if $z\not\in \Gamma_{d,g}^\perp$.}}\end{array} \right. 
\end{equation}  

\noindent 
 Moreover, with $m:=n'/d$ and $\widehat{g}(x):=(x^{mp^a}-1)/g(x)$ being the cofactor of $g$ of 
 the minimal polynomial $x^{n/d}-1$ of $E$ (with respect to $\sigma^d$), 
we write 
\begin{equation}  \label{M-module} 
M_{d,\widehat{g}}:=\{w \in E: \widehat{g}(\sigma^d)(w)=0\} \end{equation}  
for the $\fqd[x]$-submodule of $E$ which is annihilated by $\widehat{g}(x)$. 
Again by the theory of characters, one has $\Gamma_{d,g}^\perp = M_{d,\widehat{g}}$, and therefore, 
altogether, 
$q^{-d\cdot \deg(g)} \cdot \sum_{\chi \in \Gamma_{d,g}} \chi$ 
is the characteristic function of the set of all elements of $E$ that belong to $M_{d,\widehat{g}}$.

Now, let $\phi_{q^d}$ and $\mu_{q^d}$, respectively denote the Euler- and the M\"obius function for the ring 
$\fqd[x]$. 
Since $n'$ is relatively prime to $p$, the irreducible $\fqd[x]$-factors of $g$ occur with multiplicity $1$. 
Consequently, $\mu_{q^d}(g):=(-1)^{i(g)}$, where $i(g)$ is the number of distinct monic factors of $g$ that are irreducible 
over $\fqd$, while $\phi_{q^d}(g)$ is the number of units of the residue ring $\fqd[x]/(g)$. 
Finally, let  
\begin{equation} 
\label{eqn:4B1} 
A^g_d:=\frac{\phi_{q^d}(g)}{q^{d\cdot \deg(g)}} \sum_{\chi \in \Gamma_{d,g}} \frac{\mu_{q^d}(\Ord_{q^d}(\chi))}{\phi_{q^d}(\Ord_{q^d}(\chi))} 
\cdot \chi, \end{equation} 
an element of the $\C$-vector space $\C^E$ of all mappings from $E$ to $\C$. 
The important feature about $A^g_d$ is the following.   

\begin{proposition} \label{prop:Agd} 
$A^g_d$ is the characteristic function of the set of those $z\in E=\fqn$ whose $q^d$-order is divisible by $g(x)^{p^a}$. 
\end{proposition}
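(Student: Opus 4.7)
The plan is to reduce the claim to the case of irreducible $g$ by exploiting the multiplicativity arising from the primary decomposition of $E$ as an $\fqd[x]$-module. Since $\gcd(n',p)=1$ and $g$ divides $x^m-1$ (with $m:=n'/d$), $g$ is squarefree: factor $x^m-1=\prod_{i=1}^r f_i$ into distinct monic irreducibles in $\fqd[x]$, so that $g=\prod_{i\in J}f_i$ for some $J\subseteq\{1,\ldots,r\}$. Correspondingly $E=\bigoplus_j M_j$, where $M_j$ is the $\fqd[x]$-submodule annihilated by $f_j(x)^{p^a}$. The $q^d$-order of $z=\sum_j z_j$ factors as $\Ord_{q^d}(z)=\prod_j \Ord_{q^d}(z_j)$, so the condition $g(x)^{p^a}\mid\Ord_{q^d}(z)$ is equivalent to $f_i(x)^{p^a}\mid\Ord_{q^d}(z_i)$ for every $i\in J$.

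For the matching multiplicativity on the character side, I would first observe that for $i\ne j$, the factor $f_i$ acts invertibly on $M_j$, so any $\chi\in\Gamma_{d,f_i}$ must be trivial on $M_j$ for every $j\ne i$. Combined with the pairwise coprimality of the $f_i$'s in $\fqd[x]$, this yields a bijection $\Gamma_{d,g}\to\prod_{i\in J}\Gamma_{d,f_i}$, $\chi\mapsto(\chi|_{M_i})_{i\in J}$, under which $\Ord_{q^d}(\chi)=\prod_{i\in J}\Ord_{q^d}(\chi|_{M_i})$, $\chi(z)=\prod_{i\in J}(\chi|_{M_i})(z_i)$, and both $\mu_{q^d}$ and $\phi_{q^d}$ split multiplicatively. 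Rearranging the sum in (\ref{eqn:4B1}) then gives $A^g_d(z)=\prod_{i\in J}A^{f_i}_d(z_i)$, reducing everything to the case when $g$ is a single irreducible.

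For $g=f$ irreducible, $\Gamma_{d,f}$ splits into $\{\chi_0\}$ and the $\phi_{q^d}(f)=q^{d\deg f}-1$ characters of order exactly $f$. Using $\mu_{q^d}(1)=\phi_{q^d}(1)=1$ and $\mu_{q^d}(f)=-1$, the definition (\ref{eqn:4B1}) collapses to
\[
A^f_d(z)=\frac{\phi_{q^d}(f)}{q^{d\deg f}}\left[1-\frac{1}{\phi_{q^d}(f)}\Bigl(\sum_{\chi\in\Gamma_{d,f}}\chi(z)-1\Bigr)\right].
\]
Applying (\ref{eqn:basic-chi}) and identifying $\Gamma_{d,f}^\perp=M_{d,(x^{mp^a}-1)/f}$ as the set of $z$ with $f^{p^a}\nmid\Ord_{q^d}(z)$, a short case-check confirms $A^f_d(z)=1$ when $f^{p^a}\mid\Ord_{q^d}(z)$ and $0$ otherwise. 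The main obstacle is carrying out the multiplicativity step cleanly — in particular verifying that every $\chi\in\Gamma_{d,f_i}$ is genuinely supported on $\widehat{M_i}$ and that the order, the M\"obius/Euler values, and the evaluation at $z$ all factorize compatibly — after which the prime case is essentially immediate from (\ref{eqn:basic-chi}).
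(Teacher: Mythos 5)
Your proposal is correct and follows essentially the same route as the paper: reduce to the irreducible case via the multiplicative decomposition of $\Gamma_{d,g}$ into the $\Gamma_{d,f_i}$ (the paper states this multiplicativity a bit more tersely, but it is the same step), and then evaluate $A^f_d$ for irreducible $f$ using the orthogonality relation (\ref{eqn:basic-chi}) together with the identification $\Gamma_{d,f}^\perp = M_{d,\widehat{f}}$. The closed formula you obtain for $A^f_d$ agrees with the paper's (\ref{eqn:formula-Ahd}), and your final case-check matches the paper's conclusion.
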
 

\begin{proof} Consider the factorization of 
$g(x)$ into monic factors that are irreducible over $\fqd$, say $g(x):=\prod_{i=1}^t h_i(x)$.  
Because of the multiplicativity of the M\"obius function and that of the Euler function, since  
$q^{d \cdot \deg(g)}=\prod_{i=1}^t q^{d \cdot \deg(h_i)}$ and as $\Gamma_{d,g}$ in $\widehat{E}$ decomposes into the direct product 
of the $\fqd[x]$-submodules $\Gamma_{d,h_i}$, 
we obtain the multiplicativity of the functions as in 
(\ref{eqn:4B1}); this means   
\begin{equation} \label{eqn:multAhd} A^g_d = \prod_{i=1}^t A^{h_i}_d. 
\end{equation}  
Suppose now, that $h(x)\in \fqd[x]$ is some monic divisor of $g(x)$ which is irreducible over $\fqd$. 
Then $\mu_{q^d}(h)=-1$, and the fact that $\Ord_{q^d}(\chi)=h(x)$ for every nontrivial character $\chi$ of $\Gamma_{d,h}$ 
gives (after some simplifications) 
\begin{equation} \label{eqn:formula-Ahd} 
q^{d \cdot\deg(h)} \cdot A^h_d = (\phi_{q^d}(h)+1) \cdot \chi_0 - \sum_{\chi \in \Gamma_{d,h}} \chi. 
\end{equation}  
As mentioned before, 
$\Gamma_{d,h}^\perp = M_{d,\widehat{h}}$, and 
$q^{-d\cdot \deg(h)} \cdot \sum_{\chi \in \Gamma_{d,h}} \chi$ 
is the characteristic function of the set of elements that belong to $M_{d,\widehat{h}}$. 
Now, back to the formula (\ref{eqn:formula-Ahd}), and observing that $\phi_{q^d}(h)=q^{d\cdot \deg(h)}-1$ (as $h(x)$ is irreducible) we achive that 
$A^h_d(w)=1$ if $w$ is not a member of $M_{d,\widehat{h}}$, and $A^h_d(w)=0$, else. 
But $w\not\in M_{d,\widehat{h}}$ means that $w$ is not annihilated by $\widehat{h}(x)$, and this is equivalent to the fact that 
$h(x)^{p^a}$ divides the $q^d$-order of $w$. 
Consequently, because of (\ref{eqn:multAhd}), $A^g_d(w)=1$ if and only if $h_i(x)^{p^a}$ divides the $q^d$-order of $w$ for any $i$, 
that is, if and only if $g(x)^{p^a}$ divides $\Ord_{q^d}(w)$, and $A^g_d(w)=0$, else. 
\end{proof}

\noindent 
We are now returning to the decompositions in (\ref{eqn:4C1}) and (\ref{eqn:4C2}). 
For $k\in {\mathcal N}$ one has  
$A^{\Phi_{k/\tau_k}}_{\tau_k}(z)=1$ if and only if the $q^{\tau_k}$-order of $z$ is divisible 
by $\Phi_{k/\tau_k}(x)^{p^a}$, and this holds if and only if $\Ord_{q^{\tau_k}}(z_{k})$ is equal to $\Phi_{k/\tau_k}(x)^{p^a}$. 
Analogously, with $(k,f) \in \Delta^\varepsilon$ we have  
$A^{f(x^2)}_{\tau_k}(z) \cdot A^{f}_{2\tau_k}(z)=1$ if and only if the $(k,f)$-component $z^\varepsilon_{k,f}$ of $z$ 
has $q^{\tau_k}$-order $f(x^2)^{p^a}$ and $q^{2\tau_k}$-order $f(x)^{p^a}$. 
Considering $\C^E$ once more as $\C$-algebra equipped with the 
pointwise multiplication of functions, we therefore altogether obtain  

\begin{proposition} \label{prop:character-pcn} 
Let $\fqn$ be a regular extension over $\fq$. Then the characteristic function of the set of all elements of $\fqn$ that are completely normal over $\fq$ 
is equal to  
\[A^c:= \left(\prod_{k\in {\mathcal N}} A^{\Phi_{k/\tau_k}}_{\tau_k} \right) \cdot 
\left(\prod_{(k,f) \in \Delta^\varepsilon} [A^{f(x^2)}_{\tau_k}A^{f}_{2\tau_k}] \right). \]  
\qed 
\end{proposition}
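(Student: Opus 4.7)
The plan is to read the proposition off the refined decomposition (\ref{eqn:4C1})--(\ref{eqn:4C2}), Proposition \ref{prop:complete-gen-reg-cyc-mod}, and Proposition \ref{prop:Agd}. Writing $z = \sum_{k \in \mathcal{N}} z_k + \sum_{(k,f) \in \Delta^\varepsilon} z^\varepsilon_{k,f}$ according to (\ref{eqn:4C2}), the combination of Propositions \ref{prop:cn-by-can-dec} and \ref{prop:complete-gen-reg-cyc-mod} (together with the discussion following (\ref{eqn:3D2})) reformulates complete normality of $z$ as the conjunction of finitely many order equalities: $\Ord_{q^{\tau_k}}(z_k) = \Phi_{k/\tau_k}(x)^{p^a}$ for each $k \in \mathcal{N}$, and $\Ord_{q^{\tau_k}}(z^\varepsilon_{k,f}) = f(x^2)^{p^a}$ together with $\Ord_{q^{2\tau_k}}(z^\varepsilon_{k,f}) = f(x)^{p^a}$ for each $(k,f) \in \Delta^\varepsilon$. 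Each such equality needs to be matched with a factor of $A^c$.

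For the matching step, one converts each order equality into a divisibility condition on the $q^d$-order of $z$ itself (rather than of its component), because that is the format Proposition \ref{prop:Agd} reads. In each case the component lives in a submodule $\mathcal{C}_k$ or $W_{k,f}$ whose $\mathbb{F}_{q^d}[x]$-annihilator is exactly the target polynomial, so that the component order automatically divides it; hence the equality is equivalent to divisibility in the opposite direction. The step ``$g(x)^{p^a}$ divides $\Ord_{q^d}(\text{component})$'' is in turn equivalent to ``$g(x)^{p^a}$ divides $\Ord_{q^d}(z)$'', since $\Ord_{q^d}(z)$ is the least common multiple of the $q^d$-orders of all components and, by (\ref{eqn:4C1}), the annihilators of the different summands are pairwise coprime over $\mathbb{F}_{q^d}[x]$. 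This latter coprimality is inherited from coprimality over $\mathbb{F}_q[x]$, which is preserved under any extension of scalars. Applying Proposition \ref{prop:Agd} now identifies each divisibility condition with $A^{\Phi_{k/\tau_k}}_{\tau_k}(z)=1$, respectively with $A^{f(x^2)}_{\tau_k}(z)=1$ and $A^{f}_{2\tau_k}(z)=1$.

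Since each factor of $A^c$ takes only the values $0$ and $1$, the product $A^c(z)$ equals $1$ exactly when every factor does, which by the above is exactly the conjunction of all order equalities, hence exactly when $z$ is completely normal. The main technical obstacle is the coprimality bookkeeping in the exceptional case, where within a single $\mathcal{C}_k$ the polynomials $\{f(x^2)^{p^a} : f \in F_k^\varepsilon\}$ must be seen to be pairwise coprime over $\mathbb{F}_{q^{\tau_k}}[x]$ so that each $W_{k,f}$-component is independently detected by $A^{f(x^2)}_{\tau_k}$ and, simultaneously, the polynomials $\{f(x)^{p^a} : f \in F_k^\varepsilon\}$ must be pairwise coprime over $\mathbb{F}_{q^{2\tau_k}}[x]$; both are essentially what the direct sum (\ref{eqn:3D2}) already records, together with the definition of $F_k^\varepsilon$ as the irreducible $\mathbb{F}_{q^{\tau_k}}$-factors of $\Phi_{k/(2\tau_k)}(x)$.
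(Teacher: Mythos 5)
Your proposal is correct and follows essentially the same route as the paper: the paper's justification (the paragraph preceding the proposition) makes exactly the chain of equivalences you describe, namely that $A^{\Phi_{k/\tau_k}}_{\tau_k}(z)=1$ iff $\Phi_{k/\tau_k}(x)^{p^a}$ divides $\Ord_{q^{\tau_k}}(z)$ iff $\Ord_{q^{\tau_k}}(z_k)=\Phi_{k/\tau_k}(x)^{p^a}$ (and likewise for the exceptional factors), and then multiplies the $0$-$1$ valued factors. You merely make explicit the lcm/coprimality bookkeeping that the paper leaves implicit in its appeal to the dual-module identity $\Gamma_{d,g}^\perp=M_{d,\widehat{g}}$.
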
 

\begin{remark} \label{rem:simple} 
In Section \ref{section-07}, after additionally considering the primitivity condition, 
it will be convenient to adjust the notation as follows. 
For an index $k\in  {\mathcal N}$, we first let  
\begin{equation} \label{eqn:Afd-simple} 
B_{k} := \frac{q^{\tau_k\cdot \deg(\Phi_{k/\tau_k})}}{\phi_{q^{\tau_k}}(\Phi_{k/\tau_k})} \cdot  A^{\Phi_{k/\tau_k}}_{\tau_k}. 
\end{equation} 
Now, 
\begin{itemize}
\item we simply write 
$\phi_{k}$ for the Euler function $\phi_{q^{\tau_k}}$, 
\item as well as $\mu_{k}$ for the M\"obius function $\mu_{q^{\tau_k}}$. 
\item Furthermore, we write 
$\Ord_k$ instead of $\Ord_{q^{\tau_k}}$, 
\item and we abbreviate $\Gamma_{\tau_k,\Phi_{k/\tau_k}}$ to $\Gamma_k$. 
\end{itemize} 
Then altogether we obtain 
\begin{equation} \label{eqn:Bk-simple} 
B_k = \sum_{\chi \in \Gamma_k} \frac{\mu_k(\Ord_k(\chi))}{\phi_k(\Ord_k(\chi))} \cdot \chi. 
\end{equation} 
An appropriate notation for indices $(k,f) \in \Delta^\varepsilon$ 
are proposed at the end of the next section. \qed 
\end{remark}

\section{An effective character theoretical description for exceptional cyclotomic modules}   
\label{section-06} 
We have now arrived at the heart of the problem. 
The aim of the present section is to present an effective description 
of the product $A^{f(x^2)}_{\tau_k}A^{f}_{2\tau_k}$, where $k$ is from  
the index set ${\mathcal E}$ of exceptional cyclotomic modules, 
and where $f(x)$ is a monic divisor of $\Phi_{k/(2\tau_k)}(x)$ which is irreducible over 
$\mathbb{F}_{q^{\tau_k}}$. 
In order to keep the terminology as simple as possible, we presently write $Q:=q^{\tau_k}$ and let $K:=\mathbb{F}_{Q}$ and $L:=\mathbb{F}_{Q^2}$. 
Furthermore, let $S=\sigma^{\tau_k}$ be the Frobenius-automorphism of $\fqn/K$. 

\begin{enumerate} 
\item As $S$-invariant $K$-vector space, $W_{k,f}$ is annihilated by 
$f(x^2)^{p^a}$. Since $\tau_k$ is odd and $k$ is divisible by $8$ we have that 
$\Phi_{k/(2\tau_k)}(x^2)=\Phi_{k/\tau_k}(x)$, and therefore $f(x^2)$ is a divisor of $\Phi_{k/\tau_k}(x)$. 
Over $K[x]$, the polynomial $f(x^2)$ splits into two irreducible divisors (of equal degree), say $g_1(x)$ and $g_2(x)$, and therefore, 
as a $K[x]$-module (with respect to $S$), $W_{k,f}$ decomposes into 
\begin{equation} \label{eqn:Wkf-as-K} 
W_{k,f}=M_{\tau_k,f(x^2)^{p^a}} = M_{\tau_k,g_1(x)^{p^a}} \oplus M_{\tau_k,g_2(x)^{p^a}} 
\end{equation}  
and, for $w\in W_{k,f}$, we write $w=u_1+u_2$ according to this decomposition. 
(Remember the notion `$M$' for certain submodules of $E$ in (\ref{M-module}).) 
\item 
When considering $W_{k,f}$ as an $S^2$-invariant $L$-vector space, we use the indeterminate $y$ instead of $x$. 
Over $L[y]$, the polynomial $f(y)$ splits into two irreducible divisors (of equal degree), say $h_1(y)$ and $h_2(y)$, and therefore we obtain 
\begin{equation} \label{eqn:Wkf-as-L} W_{k,f}=M_{2\tau_k,f(y)^{p^a}}= M_{2\tau_k,h_1(y)^{p^a}} \oplus M_{2\tau_k,h_2(y)^{p^a}}. 
\end{equation} 
According to this, any $w\in W_{k,f}$ is decomposed as $w=v_1+v_2$. 
 \end{enumerate} 

\begin{remark} \label{rem:mixed-orders} 
The fundamental feature concerning the complete structure of $W_{k,f}$ is that any 
$w \in W^k_f$ with $\Ord_{Q^2}(w)=h_1(y)^{p^a}h_2(y)^\beta$ or $\Ord_{Q^2}(w)=h_1(y)^\alpha h_2(y)^{p^a}$ where $\alpha, \beta < p^a$
 has $Q$-order $f(x^2)^{p^a}$. Symmetrically, 
every $w \in W_{k,f}$ with $\Ord_{Q}(w)=g_1(x)^{p^a}g_2(x)^{\beta}$ or $\Ord_{Q}(w)=g_1(x)^{\alpha}g_2(x)^{p^a}$ and $\alpha, \beta < p^a$ 
has $Q^2$-order $f(y)^{p^a}$. This is a crucial fact which has been conjectured in \cite[p. 125]{Ha-1997}. 
Because of its importance we have to include a proof, which however is postponed to the last section. 
With this information at hand, we can recover the number of all those $w \in W_{k,f}$ whose order-pair 
$(\Ord_Q(w),\Ord_{Q^2}(w))$ is equal to $(f(x^2)^{p^a},f(y)^{p^a})$, and then may altogether 
count the number of completely normal elements in any regular extension.  \qed 
\end{remark} 

\noindent 
A similar situation as outlined in Remark \ref{rem:mixed-orders} occurs within the character group $\widehat{E}$ of the additive group of $E=\fqn$, and 
we are going to describe this in detail, next. 
As the M\"obius functions (occuring in the definition of the functions $A^g_d$ in (\ref{eqn:4B1})) 
vanish on polynomials which are divisible by the square 
of an irreducible polynomial, we can restrict our attention to the polynomial-pair $(f(x^2),f(y))$, getting rid of  
the power $p^a$. 
For $k \in {\mathcal E}$ and $f\in F_k^\varepsilon$ as above (that is, for $(k,f) \in \Delta^\varepsilon$), recalling the notion in 
(\ref{eqn:Gamma-d-g}), we write 
\begin{equation} \label{eqn:Gamma-k-f}
\Gamma^\varepsilon_{k,f} := \Gamma_{\tau_k,f(x^2)} = \Gamma_{2\tau_k,f(y)} 
\end{equation} 
for the set of all characters $\chi \in \widehat{E}$ which 
(with respect to $S$) are annihilated by $f(x^2)\in K[x]$; this is likewise the 
set of all characters which 
(with respect to $S^2$) are annihilated by $f(y)\in L[y]$.  
As a $K[x]$-module, $\Gamma^\varepsilon_{k,f}$ decomposes into $\Gamma_{\tau_k,g_1}$ and $\Gamma_{\tau_k,g_2}$, 
and as an $L[y]$-module, $\Gamma^\varepsilon_{k,f}$ decomposes into $\Gamma_{2\tau_k,h_1}$ and $\Gamma_{2\tau_k,h_2}$. 

Essentially, 
since the $\Gamma_{\tau_k,g_i}$ are not invariant under the multiplication with $L$ and since 
the $\Gamma_{2\tau_k,h_j}$ are not invariant under the action of $S$, these $K$-subspaces have pairwise trivial intersection. 
(This argument is worked out in Section \ref{section-13} for the situation described in Remark \ref{rem:mixed-orders}.)
Together with the trivial $K$-subspaces of $\Gamma^\varepsilon_{k,f}$ we obtain a lattice 
of six $K$-subspaces, ordered by the inclusion of sets. 

As a consequence of this discussion, if $\Ord_Q(\chi)=g_i(x)$ for some $i=1,2$, then $\Ord_{Q^2}(\chi)=f(y)$, while $\Ord_{Q^2}(\chi)=h_j(y)$ for some $j=1,2$ implies $\Ord_Q(\chi)=f(x^2)$. 
Consequently, there are the six possible pairs of orders $(\Ord_{Q}(\chi),\Ord_{Q^2}(\chi))$ given in table (\ref{tab:1}). 
With respect to componentwise divisibility, these elements build a lattice ${\mathcal L}={\mathcal L}^\varepsilon_{k,f}$ with least element $(1,1)$ and 
maximum $(f(x^2),f(y))$, while 
the four other pairs are atoms. 

Of course, this lattice corresponds to the lattice of the six $K$-subspaces mentioned above. 
Consequently, the M\"obius function of $\mathcal L$, denoted by $\mu^\varepsilon_{k,f}$ is as given in table (\ref{tab:1}). 
The lattice ${\mathcal L}$ also admits an Euler function, denoted by  $\phi^\varepsilon_{k,f}$: 
For every $\ell \in {\mathcal L}$, the term $\phi^\varepsilon_{k,f}(\ell)$ is defined to be the number of characters $\chi$ such that 
$(\Ord_{Q}(\chi),\Ord_{Q^2}(\chi))=\ell$. For simplicity, we write 
$\Ord^\varepsilon_{k,f}(\chi)$ for this pair of orders. With $\delta$ being the degree of $f(x)$, 
and from what has been said above, we obtain the following values:

\begin{equation} \label{tab:1} 
\begin{array}{c | c | c} 
{\mbox{order-pair $\ell$}} & {\mbox{$\mu^\varepsilon_{k,f}$-value}} & {\mbox{$\phi^\varepsilon_{k,f}$-value}} \\ \hline  
(1,1) & 1 & 1 \\ 
(f(x^2),h_1(y)) & -1 & Q^\delta-1 \\  
(f(x^2),h_2(y)) & -1 & Q^\delta-1 \\  
(g_1(x),f(y)) & -1 & Q^\delta-1 \\  
(g_2(x),f(y)) & -1 & Q^\delta-1 \\  
(f(x^2),f(y)) & 3 & Q^{2\delta}-4Q^\delta +3\end{array} \end{equation} 

\noindent 
We remark that $\delta=\ord_{k/(2\tau_k)}(Q)=\ord_k(q)/\tau_k^2$ (see also Section \ref{section-08}). 
The total number of elements of $\Gamma^\varepsilon_{k,f}$ is $Q^{2\delta}$. This altogether leads us to the following result.

\begin{proposition} \label{prop:charAA} For every $k\in {\mathcal E}$ and every $f\in F_k^\varepsilon$, the characteristic function of the set of all those elements of $\fqn$ whose $(k,f)$-component is a complete generator of the 
module $W_{k,f}$ is equal to 
\begin{equation} \label{eqn:new-description} A^{f(x^2)}_{\tau_k} A^{f}_{2\tau_k} = \frac{Q^{2\delta}-4Q^\delta +3}{Q^{2\delta}} \sum_{\ell \in {\mathcal L}} 
\frac{\mu^\varepsilon_{k,f}(\ell)}{\phi^\varepsilon_{k,f}(\ell)} \sum_{\chi :\ell} \chi. 
\end{equation}  
In that formula, $Q=q^{\tau_k}$ and $\delta=\deg(f)$ and ${\mathcal L}={\mathcal L}^\varepsilon_{k,f}$. 
Moreover, the 
second sum, indexed by $\chi:\ell$  runs over all $\chi \in \Gamma^\varepsilon_{k,f}$ with 
order pair $\Ord^\varepsilon_{k,f}(\chi)=(\Ord_Q(\chi),\Ord_{Q^2}(\chi))$ being equal to $\ell$. 
\end{proposition}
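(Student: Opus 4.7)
I would start by noting that the interpretation of $A^{f(x^2)}_{\tau_k}\cdot A^{f}_{2\tau_k}$ as the claimed characteristic function is already contained in the discussion preceding the proposition: by Proposition~\ref{prop:Agd} the first factor is the indicator of $f(x^2)^{p^a}$-divisibility of $\Ord_{Q}(z)$ and the second of $f(y)^{p^a}$-divisibility of $\Ord_{Q^2}(z)$, and these two conditions together are equivalent to $\Ord_{Q}(z^\varepsilon_{k,f}) = f(x^2)^{p^a}$ and $\Ord_{Q^2}(z^\varepsilon_{k,f}) = f(y)^{p^a}$, i.e., to $z^\varepsilon_{k,f}$ being a complete generator of $W_{k,f}$ by Proposition~\ref{prop:complete-gen-reg-cyc-mod}. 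The substance of the proposition is therefore the explicit formula \eqref{eqn:new-description}, which is what I would focus on.

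To prove \eqref{eqn:new-description}, my plan is to apply the multiplicativity relation \eqref{eqn:multAhd} in two directions, factoring $A^{f(x^2)}_{\tau_k} = A^{g_1}_{\tau_k}A^{g_2}_{\tau_k}$ and $A^{f}_{2\tau_k} = A^{h_1}_{2\tau_k}A^{h_2}_{2\tau_k}$, and then to simplify each of the four irreducible factors using \eqref{eqn:formula-Ahd}. Since $g_1,g_2$ have degree $\delta$ over $K$ and $h_1,h_2$ have degree $\delta/2$ over $L$, each factor reduces to the compact shape $\chi_0 - Q^{-\delta}\sum_{\chi \in \Gamma}\chi$, where $\Gamma$ runs through the four ``irreducible'' subgroups $\Gamma_{\tau_k,g_1}$, $\Gamma_{\tau_k,g_2}$, $\Gamma_{2\tau_k,h_1}$, $\Gamma_{2\tau_k,h_2}$ of $\Gamma^\varepsilon_{k,f}$. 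Expanding the resulting four-fold product gives $16$ terms indexed by the subsets $S \subseteq \{g_1,g_2,h_1,h_2\}$, each of the form $(-Q^{-\delta})^{|S|}\prod_{s\in S}\Sigma_s$ with $\Sigma_s := \sum_{\chi \in \Gamma_s}\chi$.

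The key structural input for the regrouping step is the pairwise trivial intersection of the four subgroups, spelled out in the paragraph following \eqref{eqn:Gamma-k-f}. Combined with the six-element order-pair lattice $\mathcal{L}$ from \eqref{tab:1}, this determines, for each subset $S$, exactly which order-pairs $\ell\in\mathcal{L}$ the characters produced by $\prod_{s\in S}\Sigma_s$ can carry, and with what multiplicities. Regrouping the $16$ contributions by $\ell$ therefore yields, for each $\ell$, a coefficient in front of $\sum_{\chi:\ell}\chi$ which one must check coincides with $\frac{Q^{2\delta}-4Q^\delta+3}{Q^{2\delta}}\cdot\frac{\mu^\varepsilon_{k,f}(\ell)}{\phi^\varepsilon_{k,f}(\ell)}$.

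I expect this last coefficient comparison to be the main obstacle. The four atoms of $\mathcal{L}$ each collect contributions from only a few of the $16$ terms and must combine to produce M\"obius value $-1$ together with Euler value $Q^\delta-1$, while the top element $(f(x^2),f(y))$ absorbs contributions from many of the remaining terms which must combine to yield the values $3$ and $Q^{2\delta}-4Q^\delta+3$. Once the coefficients match termwise as an identity of functions in $\C^E$, \eqref{eqn:new-description} is established.
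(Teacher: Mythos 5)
Your strategy is correct, but it is a genuinely different route from the paper's. The paper proves \eqref{eqn:new-description} by evaluating the right-hand side at an arbitrary element $w=z^\varepsilon_{k,f}$ and running a case analysis on the order pair of $w$ (cases (i)--(iv) over the six lattice elements), which forces it to invoke the element-side ``mixed orders'' fact of Remark \ref{rem:mixed-orders} (proved only in Section \ref{section-13}) to evaluate the top sum $Z(w)$ when the order pair of $w$ is an atom. Your expansion of the left-hand side via \eqref{eqn:multAhd} and \eqref{eqn:formula-Ahd} into $\prod_{s}(\chi_0-Q^{-\delta}\Sigma_s)$ works instead entirely on the character side, needing only the trivial pairwise intersections of the four subgroups $\Gamma_{\tau_k,g_i}$, $\Gamma_{2\tau_k,h_j}$ and the order-pair classification in table \eqref{tab:1}; it bypasses Remark \ref{rem:mixed-orders} for this proposition. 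The coefficient comparison you flag as the main obstacle does go through, and more easily than you fear: writing $T_\ell:=\sum_{\chi:\ell}\chi$ and $T:=\sum_{\chi\in\Gamma^\varepsilon_{k,f}}\chi$, each single factor is $\Sigma_s=T_{(1,1)}+T_{a_s}$ with $a_s$ the corresponding atom; every product of two distinct $\Sigma_s$ equals $T$ (the multiplication map $\Gamma_s\times\Gamma_t\to\Gamma^\varepsilon_{k,f}$ being bijective), every triple product equals $Q^{\delta}T$, and the quadruple product equals $Q^{2\delta}T$. Summing the sixteen terms gives $(1-4Q^{-\delta}+3Q^{-2\delta})T_{(1,1)}-(Q^{-\delta}-3Q^{-2\delta})\sum_{a}T_a+3Q^{-2\delta}T_{\rm top}$, which matches $\frac{Q^{2\delta}-4Q^{\delta}+3}{Q^{2\delta}}\sum_{\ell}\frac{\mu^\varepsilon_{k,f}(\ell)}{\phi^\varepsilon_{k,f}(\ell)}T_\ell$ coefficient by coefficient. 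What the paper's approach buys is a direct verification of the characteristic-function property together with the intermediate values of the sums $X_i$, $Y_j$, $Z$, which it reuses; what yours buys is a cleaner, purely formal identity in $\C^E$ that isolates exactly where the lattice structure of $\Gamma^\varepsilon_{k,f}$ enters.
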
 

\begin{proof} 
For an element  $z\in E=\fqn$ let $w:=z^\varepsilon_{k,f} \in W_{k,f}$ be its $(k,f)$-component. 
From the discussion at the beginning of this section, see (\ref{eqn:Wkf-as-K}) and (\ref{eqn:Wkf-as-L}), 
we write $w=u_1+u_2$ as well as $w=v_1+v_2$. 
We shall occasionally omit the variable names `$x$' and `$y$' if clear from the context, and 
are now going to examine the sum $\sum_{\chi :\ell} \chi$ for all possibilities of $\ell \in {\mathcal L}={\mathcal L}^\varepsilon_{k,f}$. 
For simplicity, we write $u_i\equiv 0$ if $\Ord_{Q}(u_i)\mid g_i^{p^a-1}$ and $u_i \not\equiv 0$, else. 
Similarly, write $v_j\equiv 0$ if $\Ord_{Q^2}(v_j)\mid h_j^{p^a-1}$ and $v_j \not\equiv 0$, else. 
\begin{enumerate} 
\item[(i)] If $\ell=(1,1)$, then $\sum_{\chi :\ell} \chi(w)=\chi_0(w)=1$. 
\item[(ii)] Assume next that $\ell=(g_i,f)$ for some $i=1,2$ and let $X_i$ be the sum over all 
characters $\chi$ with $\Ord_Q(\chi)=g_i$. Using the corresponding 
basic evaluation of character sums as in (\ref{eqn:basic-chi}), we obtain  
\[X_i(w)=\sum_{\chi :\ell} \chi(w) = \sum_{\chi \in \Gamma_{\tau_k,g_i}} \chi(w)-1 = \left\{ 
\begin{array}{cl} 
-1, & {\mbox{ if }} u_i \not\equiv 0 \\ 
Q^\delta-1,& {\mbox{ if }} u_i \equiv 0 .\end{array}\right.\] 
\item[(iii)] If $\ell=(f(x^2),h_j)$ for some $j=1,2$, then, similarly, with 
$Y_j:=\sum_{\chi :\ell} \chi$, we have that $Y_j$ is the sum over all characters with $Q^2$-order equal to $h_j$, and therefore 
\[Y_j(w)=\sum_{\chi :\ell} \chi(w) = \sum_{\chi \in \Gamma_{2\tau_k,h_j}} \chi(w)-1 = \left\{ 
\begin{array}{cl} 
-1, & {\mbox{ if }} v_j \not\equiv 0  \\ 
Q^{\delta}-1,& {\mbox{ if }} v_j \equiv 0 .\end{array}\right.\] 
\item[(iv)] Finally, let $\ell=(f(x^2),f(y))$. 
For short, let $Z:=\sum_{\chi :\ell}\chi$. 
Then 
$$Z(w)=\sum_{\chi \in \Gamma^\varepsilon_{k,f}} \chi(w)-X_1(w)-X_2(w)-Y_1(w)-Y_2(w)-1.$$ 
If $u_1\equiv 0$ and $u_2\equiv 0$, then  $v_1 \equiv 0$ and $v_2\equiv 0$ (and vice versa), and therefore  
$Z(w)=Q^{2\delta} - 4(Q^\delta-1)-1 = Q^{2\delta} - 4Q^\delta +3$.  
If $u_1\equiv 0$ but $u_2\not\equiv 0$, then $v_1 \not\equiv 0$ and $v_2\not\equiv 0$ (by what has been said in Remark \ref{rem:mixed-orders}), and 
therefore 
$Z(w)=0 - (Q^\delta-1) - 3 \cdot (-1) -1 = -(Q^\delta-3)$. 
We also obtain $Z(w)=-(Q^\delta-3)$ provided that 
$(\Ord_Q(w),\Ord_{Q^2}(w))$ is some other atom in the lattice $\mathcal L$ (which means that exactly one of $u_1$, $u_2$, $v_1$, $v_2$ is $\equiv 0$).  
If finally $u_i\not\equiv 0$ for $i=1,2$ and $v_j\not\equiv 0$ for $j=1,2$, then 
$Z(w)=0 - 4\cdot (-1)-1=3$.  
\end{enumerate} 
Now, disregarding the normalizing factor, the right hand side of the formula (\ref{eqn:new-description}) evaluated at $z$ is the same as its evaluation at $w$, 
namely 
\begin{equation} \label{eqn:evaluate} \sum_{\ell \in {\mathcal L}} 
\frac{\mu^\varepsilon_{k,f}(\ell)}{\phi^\varepsilon_{k,f}(\ell)} \sum_{\chi :\ell} \chi(z) 
= \frac{3\cdot Z(w)}{Q^{2\delta}-4Q^\delta+3}  - 
\frac{X_1(w)+X_2(w)+Y_1(w)+Y_2(w)}{Q^\delta-1} +1.\end{equation} 
With the discussion in (i)-(iv) above we determine that this gives $0$ if $w$ is \textit{not} a complete generator of $W_{k,f}$, 
that is, at least one of $u_1$, $u_2$, respectively $v_1$, $v_2$ has not the maximal $Q$-, respectively $Q^2$-order. 
On the other hand, if $w$ is a complete generator of $W_{k,f}$, then  
(\ref{eqn:evaluate}) reduces to   
\[\frac{9}{Q^{2\delta}-4Q^\delta+3} - \frac{-4}{Q^\delta-1} +1 
= \frac{9+4 \cdot (Q^\delta-3) + Q^{2\delta} -4Q^\delta +3}{Q^{2\delta}-4Q^\delta+3} 
= \frac{Q^{2\delta}}{Q^{2\delta}-4Q^\delta+3},\] 
and this altogether establishes the proof of Proposition \ref{prop:charAA}. 
\end{proof} 

\begin{remark} \label{rem:simple-2}  
Similar to Remark \ref{rem:simple}, for a pair $(k,f)\in \Delta^\varepsilon_k$, we introduce the 
following simpler terminology:  Let 
\begin{equation} \label{eqn:Afd-simple-e} 
B^\varepsilon_{k,f} := \frac{Q^{2\delta}}{Q^{2\delta}-4Q^\delta +3} \cdot A^{f(x^2)}_{\tau_k} A^{f}_{2\tau_k} .
\end{equation} 
Then, from Proposition \ref{prop:charAA} and its proof we have 
\begin{equation} 
B^\varepsilon_{k,f} = 
\sum_{\chi \in \Gamma^\varepsilon_{k,f}} \frac{\mu^\varepsilon_{k,f}(\Ord^\varepsilon_{k,f}(\chi))}
{\phi^\varepsilon_{k,f}(\Ord^\varepsilon_{k,f}(\chi))} \cdot \chi,  
\end{equation} 
where $\Gamma^\varepsilon_{k,f}$ is as in (\ref{eqn:Gamma-k-f}). 
In view of the proof of the forthcoming Proposition \ref{prop:suff-crit}, we finally define 
\begin{equation} \label{eqn:Bc}
B^c := \left(\prod_{k\in {\mathcal N}} B_k \right) \cdot 
\left(\prod_{(k,f)\in \Delta^\varepsilon} B^\varepsilon_{k,f} \right). 
\end{equation} \qed 
\end{remark} 

\section{A sufficient number theoretical condition}
\label{section-07} 
In the present section we are going to present a sufficient number theoretical condition for the existence of 
primitive complete normal bases in a regular extension $\fqn/\fq$. So, for the first time, we are confronted with the primitivity condition. 
Recalling the definitions of $F_k$ (for $k\in {\mathcal N}$) and $F_k^\varepsilon$ (for $k\in {\mathcal E}$) from Section \ref{section-04}, 
we let 
\begin{equation} \label{eqn:Omega-eps} \Omega:=\sum_{k\in {\mathcal N}} |F_k| \ {\mbox{ and }} \ \Omega^\varepsilon:=\sum_{k\in {\mathcal E}} |F^\varepsilon_k| 
\end{equation}  
(with the convention that $\Omega^\varepsilon=0$ if ${\mathcal E}$ is empty). 

\begin{proposition} \label{prop:suff-crit}
Assume that $\fqn$ is a regular extension over $\fq$, where $q\equiv 3\MOD 4$ and $n$ is even. 
Let $\Omega^c:=\Omega + 3\Omega^\varepsilon$. Furthermore, as for the multiplicative part, let $\omega$ be the number of distinct prime divisors of $q^n-1$. 
Suppose that 
\[\sqrt{q^n} > (2^\omega-1) \cdot (2^{\Omega^c}-1).\] 
Then there exists a primitive element in $\fqn$ which  is completely normal over $\fq$. 
\end{proposition}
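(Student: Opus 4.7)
The plan is the classical character-sum method. Let $N$ be the number of primitive elements of $\fqn$ that are completely normal over $\fq$. I would express
\begin{equation*}
N = \sum_{z\in\fqn^*}P(z)\,A^c(z),\qquad P(z)=\frac{\phi(q^n-1)}{q^n-1}\sum_{d\mid q^n-1}\frac{\mu(d)}{\phi(d)}\sum_{\ord(\psi)=d}\psi(z),
\end{equation*}
where $P$ is Vinogradov's indicator of primitivity with $\psi$ running over multiplicative characters of $\fqn^*$, and $A^c$ is the characteristic function of complete normality from Proposition \ref{prop:character-pcn}. Using the normalisations (\ref{eqn:Afd-simple}) and (\ref{eqn:Afd-simple-e}), I would write $A^c=\kappa\cdot B^c$ with $B^c$ as in (\ref{eqn:Bc}) and
\begin{equation*}
\kappa=\prod_{k\in{\mathcal N}}\frac{\phi_{q^{\tau_k}}(\Phi_{k/\tau_k})}{q^{\tau_k\deg\Phi_{k/\tau_k}}}\cdot\prod_{(k,f)\in\Delta^\varepsilon}\frac{Q^{2\delta}-4Q^\delta+3}{Q^{2\delta}}.
\end{equation*}
Expanding $B^c=\sum_\chi c_\chi\chi$ over additive characters reduces $N$ to a double sum over pairs $(\psi,\chi)$ weighted by M\"obius-type coefficients and the Gauss sum $\sum_{z\in\fqn^*}\psi(z)\chi(z)$.

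Next, the main term comes from $\psi=\psi_0$, $d=1$ together with the trivial $\chi=\chi_0$ in every factor of $B^c$; since $c_{\chi_0}=1$ and $\sum_{z\in\fqn^*}1=q^n-1$, this contribution equals $\kappa\phi(q^n-1)$. All other terms are controlled by the standard estimates: $|\sum_{z\in\fqn^*}\psi(z)\chi(z)|\le\sqrt{q^n}$ when both $\psi$ and $\chi$ are nontrivial, $|\sum_{z\in\fqn^*}\chi(z)|=1$ when only $\chi$ is nontrivial, and $\sum_{z\in\fqn^*}\psi(z)=0$ when only $\psi$ is nontrivial.

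The critical combinatorial step is to bound the total coefficient mass of the nontrivial choices. The multiplicative side contributes $\sum_{d\mid q^n-1}|\mu(d)|=2^\omega$, yielding $2^\omega-1$ nontrivial choices. On the additive side, each non-exceptional factor $B_k$ has $L^1$-norm $\sum_{h\mid\Phi_{k/\tau_k}}|\mu_q(h)|=2^{|F_k|}$ (the divisors of $\Phi_{k/\tau_k}$ are all square-free), while each exceptional factor $B^\varepsilon_{k,f}$ has, by table (\ref{tab:1}), $L^1$-norm $\sum_{\ell\in{\mathcal L}^\varepsilon_{k,f}}|\mu^\varepsilon_{k,f}(\ell)|=1+4+3=8\le 2^3$. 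Because characters attached to distinct $k\in{\mathcal N}$ or distinct $(k,f)\in\Delta^\varepsilon$ sit in orthogonal cyclotomic submodules of $\widehat{\fqn}$, multiplicativity gives $\sum_\chi|c_\chi|\le 2^{\Omega}\cdot 2^{3\Omega^\varepsilon}=2^{\Omega^c}$, hence $\sum_{\chi\ne\chi_0}|c_\chi|\le 2^{\Omega^c}-1$. Combining everything produces
\begin{equation*}
|N-\kappa\phi(q^n-1)|\le\kappa\phi(q^n-1)\cdot\frac{(2^\omega-1)(2^{\Omega^c}-1)\sqrt{q^n}+(2^{\Omega^c}-1)}{q^n-1},
\end{equation*}
and under the hypothesis $\sqrt{q^n}>(2^\omega-1)(2^{\Omega^c}-1)$ the right-hand side is strictly less than the main term, forcing $N\ge 1$.

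The most delicate point is the M\"obius value $3$ at the top of each lattice ${\mathcal L}^\varepsilon_{k,f}$: it is exactly what forces the per-factor bound $8=2^3$ rather than a naive count of $6$, and explains the asymmetric exponent $3\Omega^\varepsilon$ appearing in $\Omega^c$. A secondary technical check is confirming that a product of nontrivial components coming from distinct factors (different $k\in{\mathcal N}$, or different $f\in F^\varepsilon_k$ inside a given exceptional module) remains a nontrivial character of $\fqn$, so that the Gauss sum estimate may be applied uniformly to every non-main term in the expansion.
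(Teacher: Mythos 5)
Your overall strategy coincides with the paper's: Vinogradov's indicator for primitivity multiplied by the character expansion $B^c$ of complete normality, Gauss-sum estimates, and the coefficient counts $2^\omega-1$ on the multiplicative side and $2^{\Omega}\cdot 8^{\Omega^\varepsilon}-1=2^{\Omega^c}-1$ on the additive side. In particular you correctly locate the source of the exponent $3\Omega^\varepsilon$ in the $L^1$-mass $1+4+3=8$ of each exceptional lattice ${\mathcal L}^\varepsilon_{k,f}$, and your remark that a product of components which are nontrivial in distinct direct summands of $\widehat{\Gamma}$ is again nontrivial is exactly the point needed to apply the Gauss-sum bound uniformly. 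That combinatorial core is right.

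The step that fails is the last one. Because you sum over $\fqn^*$ rather than over all of $E=\fqn$, the terms with $\psi=\psi_0$ and $\chi\ne\chi_0$ do not vanish; you bound each by $1$ via the triangle inequality and so collect an extra $(2^{\Omega^c}-1)$ in the error. Your conclusion then requires $(2^\omega-1)(2^{\Omega^c}-1)\sqrt{q^n}+(2^{\Omega^c}-1)<q^n-1$, and this does \emph{not} follow from the stated hypothesis $\sqrt{q^n}>(2^\omega-1)(2^{\Omega^c}-1)$: if $(2^\omega-1)(2^{\Omega^c}-1)$ lies only slightly below $\sqrt{q^n}$, the first summand lies only slightly below $q^n$ and the added $(2^{\Omega^c}-1)\ge 1$ can push the total past $q^n-1$. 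As written you therefore establish the proposition only under a strictly stronger hypothesis than the one the later sections of the paper actually verify. Two repairs are available. (i) Do what the paper does: extend each multiplicative character by $\psi(0):=0$ for $\psi\ne\psi_0$ and $\psi_0(0):=1$ and take all Gauss sums over the whole of $E$, so that $G(\psi_0,\chi)=0$ exactly whenever $\chi\ne\chi_0$ and the mixed block disappears; the contradiction argument then yields precisely $q^{n/2}\le(2^\omega-1)(2^{\Omega^c}-1)$. (ii) Evaluate your mixed block exactly instead of estimating it: since additive characters satisfy $\chi(0)=1$ and $B^c(0)=\kappa^{-1}A^c(0)=0$, one has $\sum_{\chi\ne\chi_0}c_\chi=-1$, so the entire $\psi=\psi_0$ block contributes $+\kappa\varphi(q^n-1)\,q^n/(q^n-1)>0$ and can be absorbed into the main term, after which only the doubly nontrivial terms remain and your hypothesis suffices.
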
 

\begin{proof} We first recall a description of the characteristic function of the set of all primitive elements of 
$E:=\fqn$ (see \cite[Section 7.5]{Ju-1993}, for instance). The group of multiplicative characters of $E$, denoted by 
$\widehat{E^*}$, is the set 
of all group homomorphisms 
$\psi:(E^*,\cdot) \rightarrow (\C^*,\cdot)$, equipped with pointwise multiplication. 
The neutral element of $\widehat{E^*}$ is the trivial multiplicative character, denoted by $\psi_0$. 
This group is isomorphic to 
the multiplicative group of $E$, hence cyclic of order $q^n-1$. 
Therefore, the notion of the \textit{multiplicative order} is also used for elements of $\widehat{E^*}$: 
If $u\in E^*$, then $\ord(u)$ is the number of elements of the subgroup of $E^*$ which is generated by $u$, 
and analogously, for any multiplicative character $\psi$, the minimal integer $\ell \geq 1$ such that $\psi^\ell=\psi_0$ 
is denoted as $\ord(\psi)$. 
It is convenient to extend the domain of any multiplicative character to the whole of $E$ by defining $\psi(0):=0$ if $\psi\not=\psi_0$, while 
$\psi_0(0):=1$. 

Letting $\mu$ denote the M\"obius function on the ring of integers, then 
\begin{equation} \label{eqn:P} 
P := \frac{\varphi(q^n-1)}{q^n-1} \sum_{\psi\in \widehat{E^*}} \frac{\mu(\ord(\psi))}{\varphi(\ord(\psi))} \cdot \psi 
\end{equation} 
is the characteristic function of the set of all primitive elements of $E$. 
Using properties of the M\"obius function and the distribution of orders of the multiplicative characters, one may also write 
\begin{equation} \label{eqn:P-alt} 
P := \frac{\varphi(q^n-1)}{q^n-1} \sum_{e|\rad(q^n-1)} \frac{\mu(e)}{\varphi(e)} \sum_{\psi :e} \psi,  
\end{equation} 
where the summation index $\psi :e$ means that the sum runs over all $\varphi(e)$ multiplicative characters $\psi$ with order $e$. 
(As mentioned earlier, the function $\rad$ gives the radical of its argument.)  

Consequently, from Proposition \ref{prop:character-pcn}, we obtain that $PA^c$ is the characteristic function  of 
the set of all primitive completely normal elements of $E/F$ where $F:=\fq$. 
We want to derive a sufficient condition for 
$\sum_{w\in E} P(w)A^c(w)$ to be non-zero. So, by contraposition, assume that there is no primitive completely normal 
element in $E/F$. Then the latter sum is equal to zero, and therefore as well, 
\[ \frac{q^n-1}{\varphi(q^n-1)} \cdot \sum_{w \in E}   P(w) B^c(w) = 0, \] 
where $B^c$ is as in (\ref{eqn:Bc}). 
The relevant part of the additive character group is 
\begin{equation} \label{eqn:hat-Gamma} 
\widehat{\Gamma} := \{ \chi \in \widehat{E}: \Ord_q(\chi) {\mbox{ divides }} x^{n'}-1\}. 
\end{equation}  
With the notation from the end of Section \ref{section-05} and from (\ref{eqn:Gamma-k-f}), since character groups are written multiplicatively, 
$\widehat{\Gamma}$ is directly decomposed into 
\begin{equation} \label{eqn:hat-Gamma-dec} \widehat{\Gamma} = \left( \prod_{k \in {\mathcal N}} \Gamma_k \right) \cdot 
\left( \prod_{(k,f)\in \Delta^\varepsilon} \Gamma^\varepsilon_{k,f} \right). \end{equation}
According to this, any character $\chi$ from $\widehat{\Gamma}$ is decomposed as 
\begin{equation} \label{eqn:chi-dec} 
 \chi  = \left( \prod_{k \in {\mathcal N}} \chi_k \right) \cdot 
\left( \prod_{(k,f)\in \Delta^\varepsilon} \chi^\varepsilon_{k,f} \right). \end{equation} 
Furthermore, we write  $\Ord^c(\chi)$ for the tuple of respective orders of the components of $\chi$, that is, 
\begin{equation} \label{eqn:ord-dec}  \Ord^c(\chi) = \left( \mathop{\times}_{k \in {\mathcal N}} \Ord_k(\chi_k) \right) \times 
\left( \mathop{\times}_{(k,f)\in \Delta^\varepsilon} \Ord^\varepsilon_{k,f}(\chi^\varepsilon_{k,f}) \right). \end{equation}  
In the same way, we deal with the M\"obius functions involved, and expanding them multiplicatively, we define  
\begin{equation} \label{eqn:mu-dec} \mu^c(\Ord^c(\chi)) :=  
\left( \prod_{k \in {\mathcal N}} \mu_{k}(\Ord_{k}(\chi_{k})) \right) \cdot 
\left( \prod_{(k,f)\in \Delta^\varepsilon} \mu^\varepsilon_{k,f}(\Ord^\varepsilon_{k,f}(\chi^\varepsilon_{k,f})) \right). \end{equation}
And, of course, the various Euler functions are composed as well and lead to 
\begin{equation} \label{eqn:phi-dec} \phi^c(\Ord^c(\chi)) :=  
\left( \prod_{k \in {\mathcal N}} \phi_{k}(\Ord_{k}(\chi_{k})) \right) \cdot 
\left( \prod_{(k,f)\in \Delta^\varepsilon} \phi^\varepsilon_{k,f}(\Ord^\varepsilon_{k,f}(\chi^\varepsilon_{k,f})) \right). \end{equation}
This altogether gives us 
\[0 = \sum_{\psi \in \widehat{E^*}} \sum_{\chi \in \widehat{\Gamma}} \frac{\mu(\ord(\psi))}{\varphi(\ord(\psi))}  
\frac{\mu^c(\Ord^c(\chi))}{\phi^c(\Ord(\chi))} \cdot G(\psi,\chi),\]
where 
$G(\psi,\chi)$ is the Gauss sum $\sum_{w \in E} \psi(w) \chi(w)$. 
Now it is well known (\cite{Li-Ni-1986,Ju-1993}) that $G(\psi_0,\chi_0)=q^n$, while $G(\psi,\chi)=0$ if either $\psi=\psi_0$ or $\chi=\chi_0$.  

This implies  
\[-q^n = \sum_{\psi \in \widehat{E^*}, \atop \psi \not=\psi_0} \sum_{\chi \in \widehat{\Gamma}, \atop \chi \not=\chi_0} \frac{\mu(\ord(\psi))}{\varphi(\ord(\psi))}  
\frac{\mu^c(\Ord^c(\chi))}{\phi^c(\Ord(\chi))} \cdot G(\psi,\chi).\]
If $\psi\not=\psi_0$ and $\chi\not=\chi_0$, then the absolute value of $G(\psi,\chi)$ is equal to 
$q^{n/2}$. So, taking absolute values on both sides of the last expression and applying the triangle inequality gives 
\begin{equation} \label{eqn:ineq} q^n \leq 
\sum_{\psi \in \widehat{E^*}, \atop \psi \not=\psi_0} \sum_{\chi \in \widehat{\Gamma}, \atop \chi \not=\chi_0} 
\frac{|\mu(\ord(\psi))|}{\varphi(\ord(\psi))}  
\frac{|\mu^c(\Ord^c(\chi))|}{\phi^c(\Ord(\chi))} \cdot q^{n/2}.\end{equation} 
As mentioned above, the sum over the multiplicative characters only has to run over those $\psi$ with $\ord(\psi)$ dividing 
$\rad(q^n-1)$. Moreover, for a given divisor $e$ of $\rad(q^n-1)$ there are exactly $\varphi(e)$  multiplicative characters with order $e$. 
Similar, on the additive side, for any $\gamma$ of the underlying (complete) lattice 
\begin{equation} \label{eqn:L-dec} {\mathcal L}^c:= \left( \mathop{\times}_{k \in {\mathcal N}} {\mathcal L}_k \right) \times 
\left( \mathop{\times}_{(k,f)\in \Delta^\varepsilon} {\mathcal L}^\varepsilon_{k,f} \right) \end{equation}  
(where for $k \in {\mathcal N}$, the lattice ${\mathcal L}_k$ consists of all monic divisors of $\Phi_{k/\tau_k}(x)$ with coefficients from $\mathbb{F}_{q^{\tau_k}}$), 
there are precisely $\phi^c(\gamma)$ additive characters $\chi$ of $\Gamma$ such that $\phi^c(\Ord^c(\chi))=\phi^c(\gamma)$. 
Let $\gamma_0$ be the element of ${\mathcal L}^c$ having all its components equal to $1$ (for $k \in {\mathcal N}$), respectively $(1,1)$ (when $(k,f)$ is from $\Delta^\varepsilon$). Then $\gamma_0$ is just equal to $\Ord^c(\chi_0)$. 
With this notation at hand, the above inequality (\ref{eqn:ineq}) can be transformed to 
\[ q^{n/2} \leq \left( \sum_{e|\rad(q^n-1), \atop e \not=1} |\mu(e)| \right) \cdot 
\left( \sum_{\gamma \in {\mathcal L}^c, \atop \gamma \not= \gamma_0} |\mu^c(\gamma)|  \right).\] 
Now, the first factor of the right hand side is equal to the number of all divisors of $\rad(q^n-1)$ distinct from $1$, and by the definition of $\omega$ this is equal to $2^\omega-1$. The second factor can be expressed as 
\begin{equation} \label{eqn:factor-mu} 
 -1 + \left( \prod_{k \in {\mathcal N}}  \sum_{\alpha \in {\mathcal L}_k} |\mu_{k}(\alpha)| \right) 
\cdot \left( \prod_{(k,f)\in \Delta^\varepsilon}  \sum_{\beta \in {\mathcal L}^\varepsilon_{k,f}} 
|\mu^\varepsilon_{k,f}(\beta)| \right), \end{equation}   
where $-1$ takes $\gamma_0$ into account. Since $|{\mathcal L}_k|= 2^{|F_k|}$ and $|\mu_k(\alpha)|=1$ (for any $k\in {\mathcal N}$ and any $\alpha \in {\mathcal L}_k$), the first product by (\ref{eqn:Omega-eps}) is equal to 
\[ \prod_{k\in {\mathcal N}} 2^{|F_k|} = 2^{\Omega}. \] 
It remains to consider the second factor, but with the content of Section \ref{section-06} (see in particular (\ref{tab:1})), 
we obtain 
\[\sum_{\beta \in {\mathcal L}_{k,f}} |\mu^\varepsilon_{k,f}(\beta)| = 1 + 4 \cdot |-1| + 3 = 8\] 
for any pair $(k,f) \in \Delta^\varepsilon$, and therefore the second factor in (\ref{eqn:factor-mu}) by the definition of  $\Omega^{\varepsilon}$ 
in (\ref{eqn:Omega-eps}) gives  
\[ 8^{|\Delta^\varepsilon|} = 8^{\Omega^{\varepsilon}} = 2^{3 \cdot \Omega^{\varepsilon}}.\] 
This finally completes the proof of Proposition \ref{prop:suff-crit}. 
\end{proof} 

\noindent 
In the following two sections, we are going to demonstrate the strength of the criterion in Proposition \ref{prop:suff-crit}. 
In fact, for the case where 
$n\equiv 0 \MOD 8$, we will achieve the following result.  

\begin{proposition} \label{prop:evaluation} 
Assume that $\fqn$ is a regular extension over $\fq$, where $q \equiv 3 \MOD 4$ and $n \equiv 0 \MOD 8$. 
Then $q^{n/2} \leq (2^\omega-1) \cdot (2^{\Omega^c}-1)$ if and only if 
$q=3$ and $n\in \{8, 16\}$. 
\end{proposition}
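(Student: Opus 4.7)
The proof plan is to establish the strict inequality $q^{n/2} > (2^\omega-1)(2^{\Omega^c}-1)$ for every regular pair $(q,n)$ with $q\equiv 3\MOD 4$ and $n\equiv 0\MOD 8$ outside $\{(3,8),(3,16)\}$, and to verify the non-strict reverse inequality by direct computation in those two cases. The second task is mechanical: for $(q,n)\in\{(3,8),(3,16)\}$ one factors $q^n-1$ to read off $\omega$; then, since $n'=n$ is a power of $2$, one enumerates the few divisors $k\mid n'$, classifies them into $\mathcal{N}$ and $\mathcal{E}$ via the criteria of Section~\ref{section-03}, determines the central index $\tau_k$, and finally factors $\Phi_{k/\tau_k}$ (respectively $\Phi_{k/(2\tau_k)}$) over $\mathbb{F}_{q^{\tau_k}}$ to read off $|F_k|$ (respectively $|F_k^\varepsilon|$). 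The resulting $\Omega^c$ and $\omega$ can be assembled into a small table and the inequality checked numerically.

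For the remaining pairs, I would derive explicit upper bounds on both $\omega$ and $\Omega^c$. Writing $n'=2^b\overline{n}$ with $\overline{n}$ odd and $b\geq 3$ and using the partition of the divisor set into $\mathcal{N}'$, $\mathcal{N}''$, and $\mathcal{E}$ given by (\ref{eqn:3C1})--(\ref{eqn:3C3}), one bounds each $|F_k|$ by $\varphi(k/\tau_k)\leq k/\tau_k$ and each $|F_k^\varepsilon|$ analogously, then sums. A crude accounting yields $\Omega^c\leq\sigma(n')$, but sharper bounds exploiting $\ord_{k/\tau_k}(q^{\tau_k})\geq 2$ for $k$ beyond the smallest divisors are needed. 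For $\omega$, a concrete estimate such as $\omega(m)\leq 1.3841\log m/\log\log m$ for $m\geq 3$ applied to $m=q^n-1$ controls $2^\omega$ by a quantity of size $(q^n)^{o(1)}$, and tighter counting of prime factors $r\leq q^n$ can be invoked when $n$ is small.

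Combining gives the sufficient condition $\tfrac{n}{2}\log q>\omega+\Omega^c$, whose left side is exponential in $n$ while the right side is sub-exponential; thus the inequality holds for all sufficiently large $n$ (for any fixed $q$) and for all sufficiently large $q$ (for any fixed $n$). The main obstacle, which I expect to dominate the subsequent sections, is the transitional regime where neither $q$ nor $n$ is especially large: one must split cases carefully according to $b$ and to $\overline{n}$, refine the bound on $\Omega^c$ so that the slack is smaller than $\tfrac{n}{2}\log q-\omega$, and either prove each residual case with a sharper structural bound or dispatch it via a finite explicit verification. The bounds must be tight enough to isolate precisely $(3,8)$ and $(3,16)$ as the only genuine obstructions; any looser estimate would leave spurious pairs, and recovering these through Proposition~\ref{prop:suff-crit} would be impossible since the inequality genuinely reverses there.
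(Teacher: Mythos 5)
Your architecture coincides with the paper's (Sections \ref{section-08}--\ref{section-10}): verify $(3,8)$ and $(3,16)$ by direct computation, bound $\omega$ and $\Omega^c$ from above so that $\omega+\Omega^c<\tfrac n2\log_2 q$ holds outside a finite residual set, and dispatch the residual pairs by factoring $q^n-1$ and computing $\Omega^c$ exactly. But as a proof the proposal has a genuine gap: the quantitative tools you actually name are not strong enough to make the residual set finite in the hardest regime, and the ingredient that rescues that regime is absent. Concretely, for $q=3$, $b=3$ and $\overline{n}>1$ the crude estimate $\Omega_0\le\overline{n}$ gives $\Omega^c\le 6\overline{n}=\tfrac34 n$, leaving the requirement $\omega<\bigl(\tfrac12\log_2 3-\tfrac34\bigr)n\approx 0.04n$, which no general upper bound on $\omega$ (yours or the paper's) can meet for any realistic $n$. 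Your proposed refinement ``$\ord_{k/\tau_k}(q^{\tau_k})\ge 2$ beyond the smallest divisors'' is moreover false in general (e.g.\ $\ord_5(11)=1$, so $|F_5|=\varphi(5)$ when $q=11$), so it cannot be invoked as stated.

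What closes this regime in the paper is a consequence of regularity that your plan never uses: for $q=3$, every odd prime $r\mid n'$ must have $\ord_r(3)$ odd, which excludes $r=5$ and $r=7$ (orders $4$ and $6$), forces $r\ge 11$ and hence $\overline{n}\ge 11$, and gives $\ord_\ell(3)\ge 3$ for all $1\ne\ell\mid\overline{n}$. This yields $|F_\ell|\le\varphi(\ell)/3$ and $\Omega_0\le\tfrac13\overline{n}+\tfrac23$, hence $\Omega^c\le 2\overline{n}+4$ for $b=3$, which is what makes the budget work; a parallel divisibility argument for $\tau_{2^j}$ controls $\Omega''$ when $b>e$. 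Separately, the bound $\omega(m)\le 1.3841\log m/\log\log m$ is substantially weaker in the relevant range than the Lenstra--Schoof estimate the paper uses ($\omega<\tfrac n6\log_2 q+\tfrac{16}3$, from their Lemma 2.6 with the primes below $64$), so even after repairing $\Omega^c$ you would be left with a noticeably larger residual list to factor explicitly. Finally, note that the proposition asserts an exact characterization, so you must also exhibit the \emph{failure} of the inequality at $(3,8)$ and $(3,16)$ (the paper computes $\Omega^c=6$, $\omega=3$ giving $7\cdot 63=441>3^4$, and $\Omega^c=10$, $\omega=5$ giving $31\cdot 1023>3^8$); your plan mentions verifying these cases but frames the goal as establishing existence there rather than as confirming that the criterion genuinely reverses.
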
 

\noindent 
Observing that the pairs $(q,n)=(3,8)$ and $(q,n)=(3,16)$ are covered by the computational results of Morgan and Mullen 
\cite{Mo-Mu-1996} (see the polynomials in (\ref{3-8-and-3-16})), 
and that $(q,n)=(3,8)$ is additionally covered by the theoretical contribution of Blessenohl \cite{Bl-2005}, 
Proposition \ref{prop:evaluation} essentially implies the assertion of Theorem \ref{thm:main} for $n\equiv 0 \MOD 8$. 
The remaining cases $n\equiv 2\MOD 4$ and $n\equiv 4 \MOD 8$ will then be considered in Section \ref{section-11}.  
Further information for the pairs $(3,8)$ and $(3,16)$ is given in Section \ref{section-12}.

\section{A further sufficient existence criterion}  
\label{section-08} 
The aim of the present section is to prove a relaxation of Proposition \ref{prop:suff-crit}, which however is 
easier to apply to almost all pairs $(q,n)$ under consideration. 
It relies on upper bounds for the values $\omega$ and $\Omega^c$. 

\begin{proposition} \label{prop:weak-suff-crit}
Assume that $\fqn$ is a regular extension over $\fq$, where $q\equiv 3\MOD 4$ and $n$ is even. 
Let again $n=p^an'$ with $n'$ indivisible by $p$.  
Suppose that 
\[\frac{16}{n} + \frac{9}{4p^a} \leq \log_2(q),  {\mbox{ if $4\mid n$,}} \] 
and  
\[\frac{16}{n} + \frac{3}{p^a} \leq \log_2(q),  {\mbox{ if $n \equiv 2 \MOD 4$.}}\]   
Then there exists a primitive element in $\fqn$ that is completely normal over $\fq$. 
\end{proposition}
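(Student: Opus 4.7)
The plan is to deduce this from Proposition~\ref{prop:suff-crit} by producing sufficiently tight upper bounds on $\omega$ and $\Omega^c$ whose combined contribution stays below $\tfrac{n}{2}\log_2 q$. Indeed, $\sqrt{q^n} > (2^\omega-1)(2^{\Omega^c}-1)$ is implied by the slightly stronger inequality
\[
\tfrac{n}{2}\log_2 q \;>\; \omega + \Omega^c,
\]
and the proof naturally splits into a multiplicative estimate for $\omega = \omega(q^n-1)$ and an additive estimate for $\Omega^c = \Omega + 3\Omega^\varepsilon$.

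For the multiplicative part I would bound $\omega$ via elementary prime-counting: the product $p_1 p_2 \cdots p_\omega$ of the first $\omega$ primes divides $q^n-1$, so Chebyshev-type estimates (already the crude bound $p_1\cdots p_\omega \geq 2^\omega$ suffices, but sharper versions improve the constant) yield $\omega \leq C_1 + C_2 \cdot n\log_2 q / \log(n\log_2 q)$. Choosing the constants carefully, this contribution to the required threshold on $\log_2 q$ takes the form $16/n$, which is exactly the first summand in both hypotheses.

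For the additive part I would expand the defining identities
\[
|F_k| = \frac{\varphi(k/\tau_k)}{\ord_{k/\tau_k}(q^{\tau_k})},\qquad
|F_k^\varepsilon| = \frac{\varphi(k/(2\tau_k))}{\ord_{k/(2\tau_k)}(q^{\tau_k})},
\]
and sum over the canonical partition $\{k:k\mid n'\} = \mathcal N' \cup \mathcal N'' \cup \mathcal E$ of Section~\ref{section-03}. Using $\sum_{k\mid n'}\varphi(k)=n'$ together with the elementary bound $\ord_m(q^{\tau_k}) \geq 2$ whenever $m>1$, one derives an estimate of the shape $\Omega^c \leq c_0\, n'$, with $c_0 = 9/8$ in the case $4\mid n$ (where the weight $3$ on $\Omega^\varepsilon$ and the exceptional divisors in $\mathcal E$ must be accommodated) and $c_0 = 3/2$ in the case $n\equiv 2\MOD 4$ (where $\mathcal E$ is empty and $\mathcal N'$ collapses to $D_0\cup D_1$). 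Since $n'=n/p^a$, the resulting contribution to the threshold on $\log_2 q$ is $2c_0/p^a$, reproducing the constants $9/(4p^a)$ and $3/p^a$ respectively.

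The main obstacle is the bookkeeping that produces the sharp constants $c_0$: one must partition the divisors of $n'$ according to Section~\ref{section-03} and estimate, uniformly across the three families $\mathcal N'$, $\mathcal N''$, $\mathcal E$, the number of irreducible $\mathbb{F}_{q^{\tau_k}}$-factors of the cyclotomic polynomials $\Phi_{k/\tau_k}$ (respectively $\Phi_{k/(2\tau_k)}$ for $k\in\mathcal E$), ensuring the totals remain linear in $n'$ with the stated coefficients. Once this is in place, combining both bounds yields $\omega + \Omega^c < \tfrac{n}{2}\log_2 q$ under either hypothesis, and Proposition~\ref{prop:suff-crit} delivers the existence of a primitive completely normal element for $\fqn/\fq$.
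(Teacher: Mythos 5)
Your overall plan---deduce the statement from Proposition \ref{prop:suff-crit} by showing $\omega+\Omega^c\le\frac{n}{2}\log_2 q$, splitting into a multiplicative bound on $\omega$ and an additive bound on $\Omega^c$---is exactly the paper's plan, and your additive estimates are of roughly the right shape (the paper in fact proves the sharper bounds $\Omega^c\le\frac{3}{4}n'$ when $4\mid n$ and $\Omega^c\le n'$ when $n\equiv 2\MOD 4$, using the identities $|F_k|=\tau_k\varphi(k)/\ord_k(q)$ and $|F_k^\varepsilon|=\tau_k\varphi(k)/(2\,\ord_k(q))$ together with a case analysis on the exponent $b$ of $2$ in $n'$ against the exponent $e$ of $2$ in $q^2-1$). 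The genuine gap lies in the multiplicative half and in how the two halves are combined.

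Concretely: (1) the crude bound $p_1\cdots p_\omega\ge 2^\omega$ gives only $\omega<n\log_2 q$, which by itself already exceeds the entire budget $\frac{n}{2}\log_2 q$, so it certainly does not suffice; (2) a bound of the shape $\omega\le C_1+C_2\,n\log_2 q/\log(n\log_2 q)$ cannot contribute a term of the form $16/n$ to the threshold, since after dividing by $n/2$ it leaves $2C_2\log_2 q/\log(n\log_2 q)$, which decays like $1/\log n$ rather than $1/n$; and (3) your allocation gives $\Omega^c$ a contribution of $2c_0/p^a$ with $c_0=9/8$ (resp.\ $3/2$), which exhausts the summand $9/(4p^a)$ (resp.\ $3/p^a$) completely and leaves only $\frac{n}{2}\cdot\frac{16}{n}=8$ for $\omega$; that is, your accounting implicitly requires $\omega\le 8$ uniformly in $(q,n)$, which is false. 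The paper avoids all three problems by invoking Lemma 2.6 of Lenstra and Schoof \cite{Le-Sc-1987} with the cutoff $\ell=64$, which yields $\omega<\frac{n}{6}\log_2 q+\frac{16}{3}$: the linear term $\frac{n}{6}\log_2 q$ is absorbed by shrinking the budget from $\frac{n}{2}\log_2 q$ to $\frac{n}{3}\log_2 q$, the constant $\frac{16}{3}$ becomes the summand $16/n$ upon dividing by $n/3$, and the remaining room forces exactly the requirement $\Omega^c\le\frac{3}{4}n'$ (resp.\ $\le n'$); this is why the correct normalizing factor is $3/n$ rather than $2/n$. With your constants $c_0=9/8$ and $3/2$ and the correct factor $3/n$, the contributions would be $27/(8p^a)$ and $9/(2p^a)$, overshooting the hypothesis. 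To repair the argument you need both the Lenstra--Schoof small-prime splitting for $\omega$ and the tighter constants $3/4$ and $1$ for $\Omega^c/n'$.
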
 
We have splitted the proof of this result into four subsections. 
After deriving upper bounds $u$ for $\omega$ and $U^c$ for $\Omega^c$ such that 
$u+U^c\leq \frac{n}{2} \log_2(q)$, the existence of a primitive completely normal element in $\fqn/\fq$ is guaranteed by Proposition \ref{prop:suff-crit}. 

\subsection{Upper bounds for $\omega$} \label{subsection-upp-omega}
For an integer $\ell\geq 1$ let $P_\ell$ be the set of all primes $r < \ell$. 
If $\Lambda$ is a subset of $P_\ell$ such that 
$P_\ell \cap \pi(q^n-1) \subseteq \Lambda$, 
then Lemma 2.6 from \cite{Le-Sc-1987} gives  
\[\omega \leq \frac{\log(q^n-1)-\log(L)}{\log(\ell)} + |\Lambda|, \ \ {\mbox{ where }} \ \ L:=\prod_{r \in  \Lambda}r.\] 
For our purposes it turned out that $\ell:=64$ is a convenient choice. Thus, 
$P_\ell:=\{2$, $3$, $5$, $7$, $11$, $13$ ,$17$, $19$, $23$, $29$, $31$, $37$, $41$, $43$, $47$, $53$, $59$, $61\}.$  
Taking $\Lambda=P_\ell$ and the logarithm to the base $2$, we obtain  
$\lfloor \log_2(L) \rfloor = 76$ and therefore 
\[\omega < \frac{n\log_2(q) - 76}{6}+18 = \frac{n}{6} \log_2(q) + \tfrac{16}{3}=:u.\] 
Consequently, if $U^c\geq \Omega^c$, then the condition 
\[\frac{3}{n} \cdot \left(\tfrac{16}{3} + U^c\right) = 
\frac{16}{n} + \frac{3U^c}{n} \leq \log_2(q)\] 
is sufficient for the existence of a primitive completely normal element in $\fqn/\fq$. 

\subsection{Formulas for $\Omega$ and $\Omega^\varepsilon$} \label{subsec-8-2} 
We take up the terminology introduced at the end of Section \ref{section-03}, see (\ref{eqn:3C1})-(\ref{eqn:3C3}). 
So, for a regular pair $(q,n)$ let again $k\mid n'$. 
First of all, 
 \[|F_k|=\frac{\varphi(k/\tau_k)}{\ord_{k/\tau_k}(q^{\tau_k})} 
\ \ 
{\mbox{ and }} |F^\varepsilon_k| = \frac{\varphi(k/(2\tau_k))}{\ord_{k/(2\tau_k)}(q^{\tau_k})} 
\] 
for $k$ from ${\mathcal N}$ or ${\mathcal E}$, respectively. 

\begin{enumerate} 
\item 
By definition of $\tau_k$, the radical of $k/\tau_k$ is the radical of $k$, and therefore 
$\varphi(k/\tau_k)=\varphi(k)/\tau_k$, implying 
\[|F_k|\leq \frac{\varphi(k)}{\tau_k} \ \ {\mbox{ for $k\in {\mathcal N}$.}}\] 
Moreover, if $k\in {\mathcal E}$, then $\tau_k$ is odd while $k\equiv 0 \MOD 8$. Therefore, the radical of $k/(2\tau_k)$ is the radical of $k$. 
This gives $\varphi(k/(2\tau_k))=\varphi(k)/(2\tau_k)$ and implies 
\[|F_k^\varepsilon|\leq  \frac{\varphi(k)}{2\tau_k} \ \ {\mbox{ for $k\in {\mathcal E}$.}}\] 
\item Next, for any $k\mid n'$ we have 
$\ord_{k/\tau_k}(q^{\tau_k}) = \ord_k(q)/\tau_k^2$. 
Moreover, it holds that 
$\ord_{k/(2\tau_k)}(q^{\tau_k}) = \ord_{k/\tau_k}(q^{\tau_k}) = \ord_k(q)/\tau_k^2$ when $k\in {\mathcal E}$. 
This implies 
\begin{equation} \label{eqn:number-components} 
|F_k|= \tau_k \cdot \frac{\varphi(k)}{\ord_k(q)} \ \ {\mbox{ and }} \ \ |F^\varepsilon_k|= \tau_k \cdot \frac{\varphi(k)}{2\cdot \ord_k(q)} 
\end{equation}  
for $k$ from $\mathcal N$ and ${\mathcal E}$, respectively. 
\item 
Additional information can be obtained as follows by using the multiplicativity of the $\tau$- and the $\varphi$-function. 
We now write $k=2^j\ell$ with $\ell$ odd and $j\in \{0,1,\dots,b\}$ 
(recall from the end of Section \ref{section-03} that $n'=2^b\overline{n}$ with $\overline{n}$ being odd, hence $b\geq 1$; 
moreover, $2^e$ is the maximal power of $2$ dividing $q^2-1$, hence $e\geq 3$). 

\begin{enumerate} \item 
Because of the regularity of $(q,k)$ one even has 
$\ord_k(q)= \ord_{2^j}(q) \cdot \ord_{\ell}(q)$  
\item Altogether, this implies 
$|F_k|=|F_{2^j}| \cdot |F_\ell|$ for all $k\in {\mathcal N}$,  that is, $j\leq 2$ or $j \in \{e+1,\dots,b\}$ (for $b>e$ in the latter case). 
Given that $\mathcal E$ is non-empty, we additionally have 
$|F^\varepsilon_k|=|F^\varepsilon_{2^j}| \cdot |F_\ell|$ for all $k\in {\mathcal E}$. 
\item 
Furthermore, 
$1=|F_1|=|F_2|=|F_4|$, and therefore $|F_{2^j\ell}|=|F_\ell|$ when $j\leq 2$. 
If $b>e$ and $j\in \{e+1,\dots,b\}$, then 
\[|F_{2^j}|=\tau_{2^j} \cdot \frac{2^{j-1}}{2^{j-e+1}} = \tau_{2^j} \cdot 2^{e-2}.\] 
Finally, when $b\geq 3$ and $j\in \{3,\dots,\min(e,b)\}$, then 
$|F_{2^j}^\varepsilon|=2^{j-3}$.  
\end{enumerate} 
\item Recall also from (\ref{eqn:3C1}) that $D_j:=\{2^j \ell: \ell\mid \overline{n}\}$ for $j=0,\dots,b$. 
Let \[\Omega_j:=\sum_{k\in D_j} |F_k| {\mbox{ when }} D_j\subseteq {\mathcal N}, \ {\mbox{ while }}  \ 
\Omega^\varepsilon_j:=\sum_{k\in D_j} |F^\varepsilon_k| {\mbox { for }} D_j \subseteq {\mathcal E}.\]  
Then 
\[\Omega_j:=|F_{2^j}| \cdot \sum_{\ell\mid \overline{n}} |F_\ell|=|F_{2^j}|\cdot \Omega_0 {\mbox{ for }} D_j\subseteq {\mathcal N},\]   
and analogously, 
\[\Omega^\varepsilon_j:=|F^\varepsilon_{2^j}| \cdot \Omega_0 {\mbox{ for }} D_j\subseteq {\mathcal E}.\]   
In particular:
\begin{enumerate} 
\item  $\Omega_1=\Omega_0$, and, $\Omega_2=\Omega_0$ provided $b\geq 2$;  
\item if $b\geq 3$ and $j\in \{e+1,\dots,b\}$, then 
$\Omega^\varepsilon_j=2^{j-3} \cdot \Omega_0$. 
\end{enumerate} 

\item Finally, we define  
\[\Omega':=\left\{\begin{array}{ll} 
\Omega_0 + \Omega_1,& {\mbox{ if $b=1$, }} \\ 
\Omega_0 + \Omega_1 + \Omega_2,& {\mbox{ if $b=2$}}\end{array} \right. = \ 
\left\{\begin{array}{ll} 
2\Omega_0,& {\mbox{ if $b=1$, }} \\ 
3\Omega_0,& {\mbox{ if $b=2$}}\end{array} \right. \] 
and 
\[\Omega'':=\sum_{j=e+1}^b \Omega_j = \left(\sum_{j=e+1}^b|F_{2^j}|\right) \cdot \Omega_0 \ \ {\mbox{ for $b>e$.}}\] 
Then, with $\Omega$ and $\Omega^\varepsilon$ as in (\ref{eqn:Omega-eps}), $\Omega = \Omega' + \Omega''$, and 
\[\Omega^\varepsilon=\sum_{j=3}^{\min(e,b)} \Omega_j^\varepsilon =  (2^{\min(e,b)-2}-1) \cdot \Omega_0 \ \ {\mbox{ for $b\geq 3$.}}\] 
\end{enumerate}

\subsection{Upper bounds for $\Omega$ and $\Omega^\varepsilon$} \label{subsection-upp} 
Using the trivial upper bound $|F_\ell|\leq \varphi(\ell)$, we obtain 
$\Omega_0\leq \sum_{\ell\mid \overline{n}} \varphi(\ell)=\overline{n}$  
and Subsection \ref{subsec-8-2} (5) then implies the upper bounds  
\[\Omega' \leq \left\{ \begin{array}{ll} 2\overline{n}, & {\mbox{ if $b=1$,}} \\ 
                                         3\overline{n}, & {\mbox{ if $b\geq 2$}} \end{array} \right. 
                                         \ {\mbox{ as well as }} \ 
                                         \Omega^\varepsilon\leq (2^{\min(e,b)-2}-1) \cdot \overline{n}.\] 
As for $j\in\{e+1,\dots,b\}$ (when $b>e$) we may use the fact that $4$ divides $\ord_{2^j}(q)$, whence $\tau_{2^j}$ is divisible by $2$. Thus, 
$|F_k|\leq \frac{\varphi(k)}{\tau_k} \leq \frac{\varphi(k)}{2} = 2^{j-2}\cdot \varphi(\ell)$ for all $k\in D_j$, and this implies $\Omega_j \leq 2^{j-2} \overline{n}$ for all these $D_j$. Consequently, 
\[\Omega''\leq \sum_{j=e+1}^b 2^{j-2} \overline{n} = 2^{e-1} \cdot (2^{b-e}-1) \cdot \overline{n}.\] 
This altogether gives an upper bound for $\Omega + \Omega^\varepsilon = \Omega' + \Omega^\varepsilon + \Omega''$.  

For the case where $q=3$ better upper bounds will have to be provided in Section \ref{section-10}.

\subsection{Completion of the proof of Proposition \ref{prop:weak-suff-crit}} 
The following upper bounds for $\Omega^c=\Omega+3\Omega^\varepsilon$ (see Proposition \ref{prop:suff-crit}) are immediate from Subsection \ref{subsection-upp}. 
\begin{enumerate} 
\item If $b=1$, then $\Omega^c=\Omega'\leq 2\overline{n}=:U^c$, which gives 
$\frac{3U^c}{n}=\frac{3}{p^a}$. 
\item If $b=2$, then $\Omega^c=\Omega'\leq 3\overline{n}=:U^c$ and therefore 
$\frac{3U^c}{n}=\frac{9}{4p^a}$. 
\item If $3\leq b \leq e$, then 
$\Omega^c = \Omega' + 3 \Omega^\varepsilon \leq U^c$, where 
$U^c:=3\overline{n} + 3\cdot (2^{b-2}-1)\overline{n} = 3\cdot 2^{b-2}\overline{n}$.  
This gives $\frac{3U^c}{n}=\frac{9}{4p^a}$ as well. 
\item If $b>e$, then $\Omega^c = \Omega' + \Omega'' + 3\Omega^\varepsilon$ is less than or equal to 
$$3\overline{n} + 2^{e-1} (2^{b-e}-1)\overline{n} + 3(2^{e-2}-1)\overline{n},$$ which is 
$(2^{b-1}+2^{e-2})\overline{n}$. 
The latter is at most equal to $(2^{b-1}+2^{b-3}) \overline{n} =2^{b-3} \cdot 5 \cdot \overline{n} $. 
Therefore, $\Omega^c\leq U^c :=2^{b-2} \cdot 3 \cdot \overline{n}$. 
This gives once more $\frac{3U^c}{n}=\frac{9}{4p^a}$ and altogether proves the assertion of Proposition \ref{prop:weak-suff-crit}.   \qed 
\end{enumerate}

\section{The case $q>3$ and $n\equiv 0 \MOD 8$}
\label{section-09} 
Throughout, we assume that $n\equiv 0 \MOD 8$ and that $q\equiv 3 \MOD 4$. 
Then, 
\[\frac{16}{n} + \frac{9}{4p^a} \leq 2+\tfrac{9}{4}=\tfrac{17}{4}\] 
and $\lceil 2^{17/4}\rceil =20$. Consequently, the condition in Proposition \ref{prop:weak-suff-crit} is satisfied for all 
$q\geq 20$. It therefore remains to study the cases where 
$q \in \{3,7,11,19\}$.  
We deal with $q=19$ and $q=11$ and $q=7$ here, while $q=3$ is considered in the next section. 
Generally, when $n=8$, we have ${\mathcal N}=\{1,2,4\}$ and ${\mathcal E}=\{8\}$; moreover, $\Omega=\Omega'=3\Omega_0=3$ and $\Omega^\varepsilon=1$, hence 
$\Omega^c=6$ in this case.

\subsection{The case $q=19$:} 
Let first $q=19$. Then $\lfloor \log_2(19)\rfloor =4$. As $a\geq 0$, we have 
$\frac{16}{n} + \frac{9}{4p^a} \leq \frac{16}{n} + \frac{9}{4}$, and this is less than or equal to $4$ whenever 
$n \geq 16/(4-\frac{9}{4})=\frac{64}{7} > 9$.  
For these values of $n$ the condition in Proposition \ref{prop:weak-suff-crit} is satisfied. 
For the remaining case, namely $(q,n)=(19,8)$, we check the condition in Proposition \ref{prop:suff-crit}. 
As remarked above, $\Omega^c=6$. Furthermore, the prime power decomposition 
\[19^8-1=(19^2-1)\cdot (19^2+1) \cdot (19^4+1) = 2^5 \cdot 3^2 \cdot 5 \cdot 17 \cdot 181 \cdot 3833\] 
shows $\omega=6$. 
Now, $\omega+\Omega^c=12$ and $2^{12}=4096 < 130321 = 19^4$. 

\subsection{The case $q=11$:} 
Let next $q=11$. Then $\lfloor \log_2(11)\rfloor = 3$ and as $a\geq 0$, we have that 
$\frac{16}{n} + \frac{9}{4} \leq 3$ implies 
$\frac{16}{n} + \frac{9}{4p^a} \leq \log_2(11)$. 
Consequently, the condition in Proposition \ref{prop:weak-suff-crit} is satisfied whenever  
$n \geq 16/(3-\frac{9}{4})=\frac{64}{3} > 21$.  
Given that $n$ is divisible by $8$, it remains to consider the numbers $n\in \{8,16\}$. 
\begin{enumerate} 
\item For $n=8$ we know that $\Omega^c=6$. Furthermore,  the prime power decomposition of $11^8-1$ is 
\[11^8-1=(11^2-1)\cdot (11^2+1) \cdot (11^4+1) = 2^5 \cdot 3 \cdot 5 \cdot 61 \cdot 7321,\] 
and therefore $\omega=5$. 
Now, the condition in Proposition \ref{prop:suff-crit} is satisfied, because $\omega+\Omega^c=11$ and $2^{11}=2048 < 14641 = 11^4$. 
\item 
For $n=16$ we have $b=4$ and $e=3$ (as $11^2-1=120\equiv 8 \MOD 16$). 
Thus, $\Omega^c=6+|F_{16}|$ (where the summand $6$ comes from the corresponding value for $n=8$). 
As $\tau_{16}=2$, we obtain $|F_{16}|=4$ from the relevant part of Section \ref{section-08}, and therefore 
$\Omega^c=10$. Furthermore, 
\[11^{16}-1=(11^8-1)\cdot (11^8+1) = 2^6 \cdot 3 \cdot 5 \cdot 61 \cdot 7321 \cdot 17 \cdot 6304673\] 
is the prime power decomposition of $11^{16}-1$, 
and therefore $\omega=7$. 
Now, $\omega+\Omega^c=17$ and $2^{17} < (2^{9})^2 = 512^2 < 14641^2 = 11^8$. Again, the condition in Proposition \ref{prop:suff-crit} is satisfied. 
\end{enumerate} 

\subsection{The case $q=7$:} 
Assume finally that $q=7$. Then $\log_2(7) > 2.75$ and as $a\geq 0$, we have that 
$\frac{16}{n} + \frac{9}{4} \leq 2.75$ implies 
$\frac{16}{n} + \frac{9}{4p^a} \leq \log_2(7)$. 
Consequently, the condition in Proposition \ref{prop:weak-suff-crit} is satisfied whenever  
$n \geq 16/(2.75-\frac{9}{4})=32$.  
Given that $n$ is divisible by $8$, it remains to consider the numbers $n\in \{8,16,24,32\}$. 
\begin{enumerate} 
\item For $n=8$ we know that $\Omega^c=6$. Here, we have 
\[7^8-1=(7^2-1)\cdot (7^2+1) \cdot (7^4+1) = 2^6 \cdot 3\cdot 5^2 \cdot 1201,\] 
and therefore $\omega=4$. 
Thus, $\omega+\Omega c=10$ and $2^{10}=1024 < 2401 = 7^4$, and Proposition \ref{prop:suff-crit} gives the existence 
for the pair $(7,8)$. 
\item As $7^2-1=16\cdot 3$, for $n=16$ we have $e=b=4$. 
Consequently $\Omega^c=6+3\cdot |F^\varepsilon_{16}|$ (the first summand comming from the case $n=8$). 
As $|F^\varepsilon_{16}|=2$ (by considerations in Section \ref{section-08}), we obtain $\Omega^c=12$. 
Furthermore, 
\[7^{16}-1=(7^8-1)\cdot (7^8+1) = 2^7 \cdot 3\cdot 5^2 \cdot 1201 \cdot 17 \cdot 169553\] 
is the prime power decomposition of $7^{16}-1$, and therefore $\omega=6$. 
Now, $\omega+\Omega^c=18$ and $2^{18} = 512^2 < 2401^2 = 7^8$, hence the condition in Proposition \ref{prop:suff-crit} 
is satisfied. 
\item For $n=24$ we have ${\mathcal N}={\mathcal N}'=\{1,3,2,6,4,12\}$ and ${\mathcal E}=\{8,24\}$. 
Here, $\Omega'=3\Omega_0=3\cdot (1+2)=9$ and $\Omega^\varepsilon=\Omega_0=3$ (see again Section \ref{section-08}). 
Therefore 
$\Omega^c=18$. Moreover, 
\[\begin{array}{lcl} 
7^{24}-1 &=& (7^3-1)\cdot (7^3+1) \cdot (7^6+1) \cdot (7^{12}+1) \\ 
         &=& 2^6 \cdot 3^2 \cdot 5^2 \cdot 13 \cdot 19 \cdot 43 \cdot 73 \cdot 181 
\cdot 193 \cdot 409 \cdot 1201,\end{array} \] 
is the prime power decomposition of $7^{24}-1$, and therefore $\omega=11$. 
Now, $\omega+\Omega^c=29$ and $2^{29} < 2^{32} = 256^4 < 343^4 = (7^3)^4=7^{12}$. Again, the condition in Proposition \ref{prop:suff-crit} is satisfied. 
\item For $n=32$ we observe that $e=4<5=b$. We here use the upper bound 
\[\Omega^c \leq (2^{b-1}+2^{e-2})\overline{n} = 2^4+2=18\] 
derived at the very end of Section \ref{section-08}. 
Now, 
\[\frac{3}{n} \cdot \left(\tfrac{16}{3}+18\right)=\tfrac{1}{2} + \tfrac{27}{16}=\tfrac{35}{16} < 2.25 < \log_2(7).\] 
This settles the existence for the pair $(q,n)=(7,32)$ as well. 
\end{enumerate}

\section{The case $q=3$ and $n\equiv 0 \MOD 8$}
\label{section-10} 
For the case $q=3$ and $n\equiv 0 \MOD 8$ the bound in Proposition \ref{prop:weak-suff-crit} is only good enough, when $a\geq 1$, 
which means that $3\cdot 8=24$ divides $n$. 
But then 
\[\frac{16}{n} + \frac{9}{4p^a}  \leq \tfrac{2}{3} + \tfrac{3}{4} 
= \tfrac{17}{12} < \tfrac{3}{2} < \log_2(3)\] 
shows that the condition in Proposition \ref{prop:weak-suff-crit} is in fact satisfied. 

It is from now on sufficient to consider the case where $n=n'\equiv 0 \MOD 8$.  

Let us first have a look at $n=8$ and $n=16$. 
\begin{enumerate} 
\item If $n=8$, then $\Omega^c=6$.  
Furthermore, 
$3^8-1=2^5\cdot 5 \cdot 41$ giving $\omega=3$. Now, 
$(2^\omega-1)(2^{\Omega^c}-1)=7\cdot 63 = 441 > 81=3^4$. Thus, the condition in Proposition \ref{prop:suff-crit} is not satisfied. 
\item When $n=16$, then $b=4>3=e$ as $3^2-1=8$. 
Therefore, $\Omega^c=6+|F_{16}|$ (the summand $6$ comming from the factor $k=8$ of $n$). 
As $\tau_{16}=2$, we have $|F_{16}|=4$. Consequently, $\Omega^c=10$ in the present case. 
Furthermore, 
$3^{16}-1=(3^8-1)\cdot (3^8+1) = 2^6\cdot 5 \cdot 41 \cdot 17 \cdot 193$, and this gives $\omega=5$. Now, 
$(2^\omega-1)(2^{\Omega^c}-1)=31\cdot 1023 = 31713 > 6561 = 3^8$, hence the condition in Proposition \ref{prop:suff-crit} is again not satisfied. 
\end{enumerate} 

\noindent 
Assume now that $n\geq 32$ is divisible by $8$ and relatively prime to $3$. 
We first consider the case, where $n$ is a power of $2$, that is, $n=2^b$ with $b\geq 5$. 
Recall that $e=3$ as $3^2-1=8=2^3$. 
\begin{enumerate} 
\item Let $b=5$. Then $|F_{32}|=4$ as $\ord_{32}(3)=8$ and $\tau_{32}=1$. 
Using the calculation for the part where $n=16$, we obtain $\Omega^c=10+4=14$ in the present case. 
As $2 \cdot 21523361$ is the prime power decomposition of $3^{16}+1$, we have $\omega=6$, here. 
Now $2^{\omega+\Omega^c} = 2^{20} = 1024^2 < 6561^2=3^{16}$ and therefore the condition of 
Proposition \ref{prop:suff-crit} is satisfied by the pair $(3,32)$. 
\item Assume $n=2^b$ with $b\geq 6$. As $\ord_{64}(3)=16$, we have that $\tau_{2^j}$ is divisible by $4$ for all 
$j=6,\dots,b$. 
Thus, 
$|F_{2^j}|\leq \varphi(2^j)/\tau_{2^j}  \leq 2^{j-3}$ 
for all these $j$. This gives 
\[\Omega^c\leq 14+ \sum_{j=6}^b 2^{j-3} = 14 + 8\cdot (2^{b-5}-1) = 2^{b-2}+6 =:U^c\] 
(the summand $14$ comming from the previous case for the divisor $k=32$ of $n$). 
As in the proof of Proposition \ref{prop:weak-suff-crit} it suffices now to show that 
\[\log_2(3) \geq \frac{3}{n}\cdot \left( \tfrac{16}{3} + U^c\right).\]  
But $n=2^b$ and therefore the right hand side is 
$$\frac{3}{2^b}\cdot \left( \tfrac{16}{3} + 2^{b-2}+6 \right) = \frac{16+18}{2^b} + \tfrac{3}{4} 
\leq \tfrac{34}{64} + \tfrac{3}{4} = \tfrac{41}{32} < \tfrac{3}{2} < \log_2(3),$$ 
implying the existence of a primitive completely normal element in these extensions. 
\end{enumerate} 

\noindent 
Assume finally that $n=2^b\cdot \overline{n}$ with $b\geq 3$ and $\overline{n}>1$ odd. 
Because of the regularity, we have that $2$ does not divide $\ord_r(3)$ for every prime divisor $r$ of $\overline{n}$. 
As $\ord_5(3)=4$ and $\ord_7(3)=6$, we obtain $r\geq 11$ for every prime divisor $r$ of $\overline{n}$ (in fact 
$\ord_{11}(3)=5$ is odd). In particular $\overline{n}\geq 11$. 
But then $\ord_\ell(3)\geq 3$ for any $\ell\mid \overline{n}$ with $\ell\not=1$. 
Because of the regularity assumption, and as the prime divisors of $\tau_\ell$ divide $\ell$, we obtain 
$\ord_r(q)=\ord_r(q^{\tau_\ell})$ for any $\ell$. As the radical of $\ell$ is equal to the 
radical of $\ell/\tau_\ell$ we even obtain that 
\[\ord_{\ell/\tau_\ell}(q^{\tau_\ell})\geq 3 \ \ {\mbox{ for all $\ell\mid \overline{n}$ with $\ell\not=1$.}}\] 
This implies  
\[|F_\ell|= \frac{\varphi(\ell)}{\tau_\ell \cdot \ord_{\ell/\tau_\ell}(q^{\tau_\ell})} 
\leq \frac{\varphi(\ell)}{\ord_{\ell/\tau_\ell}(q^{\tau_\ell})}\leq \frac{\varphi(\ell)}{3}\] 
for all these $\ell$. 
Therefore, 
\[\Omega_0=\sum_{\ell\mid \overline{n}} |F_\ell| \leq  1 + \sum_{\ell\mid \overline{n}, \ell\not=1} \frac{\varphi(\ell)}{3} = 
1+ \tfrac{1}{3} \cdot (\overline{n}-1)=\tfrac{1}{3}\overline{n}+\tfrac{2}{3}.\] 
This gives 
$\Omega'=3\Omega_0\leq \overline{n} + 2$, and as ${\mathcal E}=D_3=\{8\ell: \ell \mid \overline{n}\}$, we have 
$\Omega^\varepsilon = \Omega_0 \leq \frac{1}{3}\overline{n}+\frac{2}{3}$. 
If $b>e=3$, then for all $j\in \{4,\dots,b\}$ it holds that 
$|F_{2^j}|\leq \varphi(2^j)/\tau_{2^j}  \leq 2^{j-2}$,  
as $\tau_{2^j}$ is divisible by $2$. This implies $\Omega_j\leq 2^{j-2} \Omega_0$ for these $j$ and gives 
\[\begin{array}{lcl} 
\Omega'' &\leq& 2^{e-1} \cdot (2^{b-e}-1) \cdot \Omega_0 \\ 
         &=& 4 \cdot (2^{b-3}-1) \cdot \Omega_0 \\  
         &\leq& 4 \cdot (2^{b-3}-1) \cdot (\frac{1}{3}\overline{n}+\frac{2}{3}) \\ 
         &=& (2^{b-3}-1) \cdot (\frac{4}{3}\overline{n}+\frac{8}{3}). \end{array}\] 
         
\begin{enumerate} 
\item Suppose $b=3$. Then 
$\Omega^c = \Omega' + 3 \Omega^\varepsilon \leq 2\overline{n}+4=:U^c$. 
Therefore 
\[\frac{3}{n} \cdot \left( \tfrac{16}{3} + U^c \right) 
= \frac{3}{n} \cdot \left( \tfrac{16}{3} + 2\overline{n}+4 \right) 
= \frac{20}{n} + \frac{6}{2^b}=\frac{20}{n} + \tfrac{3}{4}.\]  
As $n\geq 8\cdot 11$ is less or equal to 
$\frac{20}{88} + \frac{3}{4} = \frac{43}{44} < 1 < \log_2(3)$,  
the condition of Proposition \ref{prop:suff-crit} can be satisfied in this case. 
\item Suppose next that $b\geq 4$. Then 
$\Omega^c = \Omega' + 3 \Omega^\varepsilon + \Omega'' \leq U^c$, where 
\[U^c:=2\overline{n}+4 + (2^{b-3}-1) \cdot \left(\tfrac{4}{3}\overline{n}+\tfrac{8}{3}\right)  = 
\tfrac{2}{3} \overline{n} + 2^{b-3} \overline{n} +\tfrac{4}{3} +\frac{2^b}{3} .\] 
Here, 
$$\frac{3}{n} \cdot \left( \tfrac{16}{3} + U^c\right) = \frac{20}{n} + \frac{1}{2^{b-1}} + \tfrac{3}{8} + \frac{1}{\overline{n}}.$$ 
Moreover, $n\geq 16\cdot 11$ as $b\geq 4$ and $\overline{n}\geq 11$. 
Therefore,  
$$\frac{3}{n} \cdot \left( \tfrac{16}{3} + U^c\right) \leq \tfrac{5}{44} + \tfrac{1}{8} + \tfrac{3}{8} + \tfrac{1}{11} = \tfrac{41}{44} 
< 1 < \log_2(3).$$ 
This shows that the condition in Proposition \ref{prop:suff-crit} can also be satisfied for these parameters. 
\end{enumerate} 

\noindent 
The proof of Proposition \ref{prop:evaluation} is now complete.

\section{The case $q\equiv 3 \MOD 4$, and $n\equiv 2 \MOD 4$ or $n\equiv 4 \MOD 8$}
\label{section-11} 
In this section we finally settle the existence of primitive completely normal elements in 
regular extensions 
$\fqn/\fq$ where $q\equiv 3 \MOD 4$ and $n$ is even, but not divisible by $8$. 
Here we have that the set $\mathcal E$ of exceptional indices is empty, while 
${\mathcal N}={\mathcal N}'$ and therefore $\Omega^c=\Omega'$. 
We write again $n=p^a \cdot 2^b \cdot \overline{n}$ with $\overline{n}$ odd. 
Now $b=1$ or $b=2$. 
If $b=1$, then $\Omega^c=2\Omega_0$, while $\Omega^c=3\Omega_0$ when 
$b=2$ (see Subsection \ref{subsec-8-2} (5)). 

Similar to \cite[Section 6]{Ha-2001} we do not seek a classification of all 
pairs $(q,n)$ which satisfy the sufficient condition in Proposition \ref{prop:suff-crit}. 
Instead, we exclude those pairs $(q,n)$ for which the extension $\fqn/\fq$ is completely basic in advance and 
are therefore able to work with better estimates for $\Omega^c$: 
A Galois field extension $\fqn/\fq$, and also the pair $(q,n)$ are called \textit{completely basic}, if every normal element of $\fqn/\fq$ 
already is completely normal in $\fqn/\fq$.  
According to \cite[Theorem 3.1]{Ha-2001}, 
for an extension $\fqn/\fq$, the following are equivalent:  
\begin{enumerate} 
\item $\fqn$ is completely basic over $\fq$. 
\item For every prime divisor $r$ of $n$, every normal element of $\fqn/\fq$ is normal in $\fqn/{\mathbb F}_{q^r}$. 
\item For every prime divisor $r$ of $n$, the number ${\rm ord} _{(n/r)'}(q)$ is not divisible by $r$. 
\end{enumerate} 
Furthermore, by \cite[Proposition 3.2]{Ha-2001}, every completely basic extension is regular. 
On the other hand, assuming that $(q,n)$ is regular, then $(q,n)$ is completely basic if and only if 
$(q,n)$ is not exceptional and $\alpha(r)\leq 1$ for every prime divisor $r$ of $n'$ (this is \cite[Proposition 3.3]{Ha-2001}). 
Here, $\alpha(r)$ is the parameter occuring in the suborder of $q$ modulo $n'$, see (\ref{eqn:3B2}) in Section \ref{section-04}. 
In other words, if $(q,n)$ is regular but not completely basic, then there is a prime divisor $r$ of $n'$ such that 
$r^2$ divides $\ord_{n'}(q)$. Because of the regularity-condition, even $r^3$ divides $n'$, then. 
Since $n \not\equiv 0 \MOD 8$ we may from now on assume that $n'$ is divisible by the cube of an odd prime $r$.  
Moreover, one has that $\tau_k$ is divisible by $r$ for any $k\mid n'$ such that $r^3\mid k$. 
It is proved in \cite[Section 6, see formula (6.6)]{Ha-2001} 
that then (with the present notation) 
\[\Omega^c \leq \frac{2r-1}{r^2} \cdot n'=:U^c\] 
for all these situations. 
Now, taking the same upper bound $u$ for the number of distinct prime divisors of $q^n-1$ as in Subsection \ref{subsection-upp-omega}, namely  
\[u:=\frac{n}{6} \log_2(q) + \tfrac{16}{3},\] 
we have that the condition 
\[\frac{3}{n}\cdot \left( \tfrac{16}{3} +  \frac{2r-1}{r^2} \cdot n' \right) \leq \log_2(q)\] 
is sufficent for the existence of a primitive completely normal element in $\fqn/\fq$. 
The left hand side of the latter inequality is 
\[\frac{16}{n} +  \frac{6r-3}{r^2p^a}.\] 
Now, $n\geq 2 r^3$ and $p^a\geq 1$. Furthermore, the function $r\mapsto \frac{6r-3}{r^2}$ is monotonely decreasing for $r\geq 1$. 
Therefore  
\[\frac{16}{n} +  \frac{6r-3}{r^2p^a} \leq \frac{16}{2r^3} +  \frac{6r-3}{r^2} \leq \tfrac{8}{27} +\tfrac{15}{9} = \tfrac{53}{27} < 2.\]
This settles the existence for all $q\geq 4$ and it therefore remains to consider the case where $q=3$. 
But when $q=3$, then $r\geq 11$ for every odd prime divisor of $n'$, as $(3,n)$ is regular ($\ord_r(3)$ must not be divisible by $2$). 
Therefore, 
\[\frac{16}{n} +  \frac{6r-3}{r^2p^a} \leq \frac{16}{2r^3} +  \frac{6r-3}{r^2} \leq \tfrac{8}{11^3} +\tfrac{63}{11^2} = \tfrac{701}{1331} < 1,\] 
and the existence also follows for the case where $q=3$. 

\section{The pairs $(3,8)$ and $(3,16)$}
\label{section-12} 
In the present section we just like to give some information on 
the $8$- and the $16$-dimensional extension of the ternary field $\mathbb{F}_3$. 

Consider first the pair $(3,8)$. The largest power of $2$ dividing $3^8-1$ is $2^5$. 
Over $\mathbb{F}_3$ the cyclotomic polynomial $\Phi_{32}(x)$ splits as 
$(x^8+x^4-1)(x^8-x^4-1)$. Let $\zeta$ be a primitive $32$nd root of unity. 
Based on the theory in \cite[Chapter VI, in particular Section 23]{Ha-1997}, 
$\zeta + \zeta^3$ is a complete generator of the cyclotomic module ${\mathcal C}_8$ over $\mathbb{F}_3$, 
while $\zeta^2$ is a complete generator of ${\mathcal C}_4$, and $\zeta^4\in \mathbb{F}_9$ is normal over $\mathbb{F}_3$. 
By Proposition \ref{prop:cn-by-can-dec}, 
$v:=\zeta^4 + \zeta^2 + (\zeta + \zeta^3)$ therefore is a completely normal element of 
$\mathbb{F}_{3^8}$ over $\mathbb{F}_3$. 
Now, if $\zeta$ in particular is a root of $x^8+x^4-1$, then 
$v$ is also a primitive element of $\mathbb{F}_{3^8}$. The latter has been checked with a computer. 

The field $\mathbb{F}_{3^{16}}$ is obtained from $\mathbb{F}_3$ by adjoining a primitive $64$th root of unity, say $\eta$. 
Then $u:=\eta + \eta^3 + \eta^5 + \eta^7$ is a complete generator of the cyclotomic module ${\mathcal C}_{16}$ over $\mathbb{F}_3$ 
(\cite{Ha-1997}), and therefore, $v+u$ is a completely normal element of $\mathbb{F}_{3^{16}}$ over $\mathbb{F}_3$, when 
$v$ as above is composed by certain powers of a primitive $32$nd root of unity $\zeta$. 
If $\eta$ is a root of $x^{16}+x^8-1$, an irreducible divisor of $\Phi_{64}(x)$ from $\mathbb{F}_3[x]$, and if $\zeta=\eta^2$, then 
$v+u$ is even a primitive element of $\mathbb{F}_{3^{16}}$. Again, the primitivity condition has been 
checked by a computer.

\section{The last step and the number of completely normal elements in regular extensions}
\label{section-13} 
The last step in order to complete the proof of Theorem \ref{thm:main} is to justify what 
has been said in Remark \ref{rem:mixed-orders}. So, let us take up the terminology introduced there, as well as at the beginning of Section \ref{section-06}. 

For an element $w$ of $W_{k,f}$ we consider the following three homomorphisms: 
\begin{enumerate}
\item $\Psi'_K:K[y] \rightarrow E$, $a(y) \mapsto a(S^2)(w)$, 
\item $\Psi_L:L[y] \rightarrow E$, $b(y) \mapsto b(S^2)(w)$, 
\item $\Psi_K:K[x] \rightarrow E$, $c(x) \mapsto c(S)(w)$. 
\end{enumerate} 
Suppose that $w$ has $Q^2$-order equal to $h_1^\alpha h_2^\beta$, where, without loss of generality, $\alpha \geq \beta$. 
Then $h_1^\alpha h_2^\beta$ generates the kernel of $\Psi_L$, and therefore, the kernel of $\Psi'_K$ is generated by $f^\alpha$, since 
$f=h_1h_2$ over $L$. 
This shows that $f(x^2)^\alpha$ is a member of the kernel of $\Psi_K$; the latter 
is generated by the $Q$-order of $w$, say $g_1^\gamma 
g_2^\lambda$. 

The image of $\Psi_K'$ is contained in the image of $\Psi_L$ as well as in the image of 
$\Psi_K$; let these three $K$-vector spaces be denoted by $\im(\Psi_K')$, $\im(\Psi_L)$ and $\im(\Psi_K)$, respectively. 
The $K$-dimension of $\im(\Psi_K')$ is $\alpha \cdot \deg(f)$, while the $K$-dimension of $\im(\Psi_L)$ is equal to 
$2 \cdot(\alpha + \beta) \cdot \deg(f)/2=(\alpha + \beta) \cdot \deg(f)$. 
The $K$-dimension of $\im(\Psi_K)$ is $\gamma \cdot \deg(g_1) + \lambda \cdot \deg(g_2)=(\gamma + \lambda ) \cdot \deg(f)$, 
and this is less than or equal to $2 \alpha \cdot \deg(f)$, since $g_1^\gamma 
g_2^\lambda$ divides $f(x^2)^\alpha$.  

\begin{itemize} 
\item Now, suppose that $\beta=0$. Then $M_{2\tau_k,h_1^\alpha}=\im(\Psi_L) = \im(\Psi_K') \subseteq \im(\Psi_K)$. 
Since $\im(\Psi_K)$ is $S$-invariant, it contains $S(w)$, which has $Q^2$-order equal to $h_2^\alpha$, as well, 
and therefore also $M_{2\tau_k,h_2^\alpha} \subseteq \im(\Psi_K)$. 
This altogether implies that $M_{2\tau_k,f^\alpha}\subseteq \im(\Psi_K)$ (compare with (\ref{eqn:Wkf-as-L})). 
But $M_{2\tau_k,f^\alpha}$ is equal to $M_{\tau_k,f(x^2)^\alpha}$, and this gives that 
the $K$-dimension of 
$\im(\Psi_K)$ is at least $2\alpha \cdot \deg(f)$. Consequently, $\im(\Psi_K)$ has $K$-dimension 
exactly equal to $\alpha \cdot \deg(f(x^2))$, and this means that the $Q$-order of $w$ is equal to 
$f(x^2)^\alpha$. 

\item Assume next that $0\leq \beta < \alpha$, and write $w$ as $v_1+v_2$ where 
$v_1$ has $Q^2$-order $h_1^\alpha$ and $v_2$ has $Q^2$-order $h_2^\beta$. 
By the discussion of the case "$\beta=0$", we obtain that 
$v_1$ then has $Q$-order $f(x^2)^\alpha$, while $v_2$ must have $Q$-order $f(x^2)^\beta$. 
But, since $\beta<\alpha$, the $Q$-order of $w$ is then equal to 
$f(x^2)^\alpha$, and this altogether proves one part of the claim in Remark \ref{rem:mixed-orders}.  
\end{itemize} 

\noindent For the second part of the claim, we assume that $w\in W_{k,f}$ has $Q$-order equal to 
$g_1^\gamma g_2^\lambda$ with $\gamma \not= \lambda$, without loss of generality, let $\gamma > \lambda$. 
From the assertion of the first part of the claim, the $Q^2$-order of $w$ then has to have the form $h_1^\alpha h_2^\alpha = f^\alpha$ for some $\alpha$. 
Therefore, 
$g_1^\gamma g_2^\lambda$ divides $f(x^2)^\alpha=g_1^\alpha g_2^\alpha$, and this shows $\gamma \leq \alpha$. 
On the other hand, $g_1^\gamma g_2^\lambda$ divides $f(x^2)^\gamma$, and therefore 
$f(S^2)^\gamma(w)=0$ gives $\alpha=\gamma$ as $\Ord_{Q^2}(w)=f^\alpha$. 
The latter holds in particular when $\lambda=0$, and this 
settles the second part of the claim in Remark \ref{rem:mixed-orders}. \qed

\vspace{1ex} \noindent 
As mentioned in \cite{Ha-1997} (see Section 21, in particular p. 125), we are now able to derive the following formula for the 
total number of completely normal elements in any regular extension. 

\begin{theorem} 
Assume that $\fqn/\fq$ is a regular extension. Let $n=p^an'$, where $p$ is the characteristic of these fields and $n'$ is the $p$-free part of $n$. 
Let $\mathcal N$ and $\mathcal E$ be the 
index sets for the non-exceptional, respectively exceptional cyclotomic modules of that extension. Then, with $\tau_k$ as defined in (\ref{eqn:3B3}) for any $k\mid n'$,  
the number of completely normal elements of $\fqn/\fq$ is equal to the product of 
\[ \prod_{k\in {\mathcal N}} \left(q^{\ord_k(q)/\tau_k}-1\right)^{\tau_k \varphi(k)/\ord_k(q)} \cdot q^{(p^a-1)\cdot \varphi(k)} \] 
with 
\[
\prod_{k\in {\mathcal E}} \left(q^{2\ord_k(q)/\tau_k}-4q^{\ord_k(q)/\tau_k} +3\right)^{\tau_k \varphi(k)/(2\ord_k(q))} \cdot q^{(p^a-1)\cdot \varphi(k)},\] 
where the second factor is defined to be equal to $1$ provided that $\mathcal E$ is empty. \qed 
\end{theorem}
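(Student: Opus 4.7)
The plan is to combine Proposition \ref{prop:cn-by-can-dec} with the refined decomposition (\ref{eqn:4C1}) to reduce the global count to a product of local counts, one per non\-exceptional cyclotomic module $\mathcal C_k$ ($k\in\mathcal N$) and one per component $W_{k,f}$ with $(k,f)\in\Delta^\varepsilon$. The contribution of each $p$-part—namely the factor $q^{(p^a-1)\varphi(k)}$—will ultimately come from the units of $\mathbb F_{q^{\tau_k}}[x]/(h^{p^a})$ that sit above a unit modulo $h$, i.e.\ the formula $|(\mathbb F_{Q}[x]/(h^{p^a}))^\times|=(Q^{\deg h}-1)\cdot Q^{\deg(h)(p^a-1)}$ applied module by module.

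For the non-exceptional part, I would fix $k\in\mathcal N$ and, using Proposition \ref{prop:complete-gen-reg-cyc-mod}, split $\mathcal C_k=\bigoplus_{f\in F_k}V_{k,f}$ into the cyclic $\mathbb F_{q^{\tau_k}}[x]$-modules annihilated by $f(x)^{p^a}$. A complete generator of $\mathcal C_k$ corresponds exactly to a tuple whose components are generators of the individual $V_{k,f}$. Since each irreducible $f\in F_k$ has degree $\ord_k(q)/\tau_k^2$ and $|F_k|=\tau_k\varphi(k)/\ord_k(q)$ by the formula in Subsection \ref{subsec-8-2}, multiplying $(Q^{\deg f}-1)Q^{\deg(f)(p^a-1)}$ over $F_k$ and simplifying via $\deg(f)\cdot|F_k|=\varphi(k)/\tau_k$ and $Q=q^{\tau_k}$ produces the desired first factor $(q^{\ord_k(q)/\tau_k}-1)^{\tau_k\varphi(k)/\ord_k(q)}\cdot q^{(p^a-1)\varphi(k)}$.

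For the exceptional part I would, for each $(k,f)\in\Delta^\varepsilon$, work inside $W_{k,f}$ with both $K$-decomposition $W_{k,f}=M_{\tau_k,g_1^{p^a}}\oplus M_{\tau_k,g_2^{p^a}}$ and $L$-decomposition $W_{k,f}=M_{2\tau_k,h_1^{p^a}}\oplus M_{2\tau_k,h_2^{p^a}}$. Writing $A$ (resp.\ $B$) for the set of $w\in W_{k,f}$ with maximal $Q$-order $f(x^2)^{p^a}$ (resp.\ maximal $Q^2$-order $f(y)^{p^a}$), the factorization-by-components argument above gives $|A|=|B|=(Q^\delta-1)^2Q^{2\delta(p^a-1)}$, where $\delta=\deg(f)$ and $Q=q^{\tau_k}$. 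The key step, and the main obstacle, is to determine $|A\cap B|$: here the mixed-orders result proved in Section \ref{section-13} (recorded in Remark \ref{rem:mixed-orders}) is decisive. It says that any $w\in A\setminus B$ must have $Q^2$-order of the form $h_1^{p^a}h_2^\beta$ with $\beta<p^a$ or $h_1^\alpha h_2^{p^a}$ with $\alpha<p^a$; counting these two disjoint sub-cases via the component decomposition gives $|A\setminus B|=2(Q^\delta-1)Q^{2\delta(p^a-1)}$, hence
\[
|A\cap B|=(Q^\delta-1)(Q^\delta-3)\,Q^{2\delta(p^a-1)}=(Q^{2\delta}-4Q^\delta+3)\,Q^{2\delta(p^a-1)}.
\]
This is exactly the per-$W_{k,f}$ count of complete generators.

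To finish, I would take the product over $F_k^\varepsilon$ and use the identities $\delta=\ord_k(q)/\tau_k^2$, $|F_k^\varepsilon|=\tau_k\varphi(k)/(2\ord_k(q))$, $Q^\delta=q^{\ord_k(q)/\tau_k}$ and $2\tau_k\delta|F_k^\varepsilon|=\varphi(k)$ from Subsection \ref{subsec-8-2}; this yields the announced factor $(q^{2\ord_k(q)/\tau_k}-4q^{\ord_k(q)/\tau_k}+3)^{\tau_k\varphi(k)/(2\ord_k(q))}\cdot q^{(p^a-1)\varphi(k)}$. Multiplying the contributions over $k\in\mathcal N$ and $k\in\mathcal E$, together with the convention that an empty product over $\mathcal E$ equals $1$, completes the proof. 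The only genuinely nontrivial ingredient is the mixed-orders dichotomy from Section \ref{section-13}; everything else is bookkeeping with cyclic module generators and the regularity identities for $\ord_k(q)$, $\tau_k$, $|F_k|$ and $|F_k^\varepsilon|$.
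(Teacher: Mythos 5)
Your argument is correct and is essentially the route the paper intends: the paper states the theorem with the proof left implicit, relying on Proposition \ref{prop:cn-by-can-dec}, the decomposition into the modules ${\mathcal C}_k$ and $W_{k,f}$, and the mixed-orders dichotomy of Remark \ref{rem:mixed-orders} proved in Section \ref{section-13}, and your count $(Q^{2\delta}-4Q^{\delta}+3)\,Q^{2\delta(p^a-1)}$ per exceptional component is exactly the element-level counterpart of the $\phi^{\varepsilon}_{k,f}$-value for the maximal order pair in table (\ref{tab:1}). Your bookkeeping with $\delta=\ord_k(q)/\tau_k^2$, $|F_k|$ and $|F_k^{\varepsilon}|$ reproduces the stated exponents, so the proposal supplies a correct and complete version of the omitted proof.
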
 

\noindent 
For example, the number of completely normal elements of $\mathbb{F}_{3^8}$ over $\mathbb{F}_3$ is equal 
to $(3-1) \cdot (3-1) \cdot (3^2-1) \cdot (3^4-4\cdot 3^2 +3)=1536$, while the number of completely normal elements of 
$\mathbb{F}_{3^{16}}$ over $\mathbb{F}_3$ is equal to 
$1536 \cdot (3^2-1)^4 = 6291456$.

\section{Concluding remarks}
\label{section-14} 
A draft of the present work has already been written in 2014. 
In the meantime further progress concerning the conjecture of Morgan and Mullen has been achieved. 
For an overview, we refer to Section 13.11 of the forthcoming monograph 
{\em Topics in Galois Fields} by Dirk Hachenberger and Dieter Jungnickel, to be published in 2020 
by Springer. For an extensive improvement of the computational results of Morgan and Mullen, as well as 
for a further overview on the state of the art of the conjecture of Morgan and Mullen, we refer 
to the work {\em Computational results on the existence of primitive complete normal basis generators}, 
by Dirk Hachenberger and Stefan Hackenberg, which will soon be available, here in \texttt{arXiv}.

\vspace{4ex} \noindent 
{\bf Acknowledgements.} 
I thank Stefan Hackenberg, a former master student of mine, for independently checking my computational results concerning 
the pairs $(3,8)$ and $(3,16)$.

\vspace{16ex} 


\begin{thebibliography}{10}

\bibitem{Bl-Jo-1986} 
{D. Blessenohl \and K. Johnsen}, 
`Eine Versch\"arfung des Satzes von der Normalbasis', 
{\em J. of Algebra}  \textbf{103} (1986), 141--159. 

\bibitem{Bl-2005} 
{D. Blessenohl}, `Zu einer Vermutung von Morgan und Mullen',  
\textit{Berichtsreihe des Mathematischen Seminars der Universit\"at Kiel}, 
05-21 (2005). 
 
\bibitem{Ca-1952} 
{L. Carlitz},    
`Primitive roots in a finite field',   
\textit{Trans. Am. Math. Soc.} \textbf{73} (1952), 373--382. 

\bibitem{Da-1968} 
{H. Davenport},  
`Bases for finite fields', 
{\em J. London Math. Soc.} \textbf{43} (1968), 21--49. 

\bibitem{Ha-1997} 
{D. Hachenberger},   
`Finite Fields: Normal Bases and Completely Free Elements', 
Kluwer Academic Publishers, Boston, 1997. 

\bibitem{Ha-2001} 
{D. Hachenberger}, `Primitive complete normal bases for regular extensions', 
\textit{Glasgow Math. J.} 43 (2001),  383--398.  

\bibitem{Ha-2010} 
{D. Hachenberger}, `Primitive complete normal bases: existence in certain 2-power extensions and lower bounds', 
\textit{Discrete Math.} \textbf{310} (2010),  3246-3250.  

\bibitem{Ha-2013} 
{D. Hachenberger}, 
`Completely normal bases', 
Section 5.4 in "Handbook of Finite Fields", Eds: G. L. Mullen and D. Panario, CRC Press, Boca Raton (2013), 128--138.  

\bibitem{Ju-1993} 
{D. Jungnickel}, 
`Finite Fields - Structure and Arithmetics', 
BI-Wiss.-Verl., Mannheim, 1993. 

\bibitem{Le-Sc-1987} 
{H. W. Lenstra, Jr. \and R. J. Schoof},  
`Primitive normal bases for finite fields', 
\textit{Math. Comp.} \textbf{48} (1987), 217--231.  


\bibitem{Li-Ni-1986} 
{R. Lidl \and H. Niederreiter}, 
`Introduction to Finite Fields and their Applications', 
Cambridge University Press, Cambridge, 1986. 

\bibitem{Mo-Mu-1996} 
{I. Morgan \and G. L. Mullen},  
`Completely normal primitive basis generators of finite fields',   
\textit{Utilitas Math.} \textbf{49} (1996),  21--43. 

\end{thebibliography}
\end{document}